\newtheorem{Thm}{Theorem}
\newtheorem{Lem}{Lemma}
\newtheorem{Prop}{Proposition}
\newtheorem{Coro}{Corollary}
\theoremstyle{definition}
\newtheorem{Rem}{Remark}
\newtheorem{Cond}{Condition}
\newtheorem{Example}{Example}
\newcommand{\comment}[1]{}
\newcommand{\ind}{{\bf 1}}
\def\indd#1{{\bf 1}_{\{#1\}}}
\newcommand{\proba}{\mathbb P}
\newcommand{\esp}{{\mathbb E}}
\newcommand{\pp}{^\prime}
\newcommand{\defe}{\mathrel{\mathop:}=}
\newcommand{\filF}{{\cal F}}
\newcommand{\calN}{{\cal N}}
\def\indzd#1{\{#1_t\}_{t\in\mathbb Z^d}}
\def\indn#1{\{#1_n\}_{n\in \mathbb N}}
\newcommand{\eqnh}{\begin{eqnarray*}}
\newcommand{\eqne}{\end{eqnarray*}}
\newcommand{\eqnhn}{\begin{eqnarray}}
\newcommand{\eqnen}{\end{eqnarray}}
\newcommand{\equh}{\begin{equation}}
\newcommand{\eque}{\end{equation}}
\def\prodd#1#2#3{\prod_{#1 = #2}^{#3}}
\def\sif#1#2{\sum_{#1=#2}^\infty}
\newcommand{\widebar}{\overline}
\def\nn#1{{\left\|#1\right\|}}
\def\bnn#1{\Big\|#1\Big\|}
\def\sabs#1{|#1|}
\def\babs#1{\Big|#1\Big|}
\def\bccbb#1{\Big\{#1\Big\}}
\def\bpp#1{\Big(#1\Big)}
\def\spp#1{(#1)}
\def\sbb#1{[#1]}
\def\bbb#1{\Big[#1\Big]}
\def\floor#1{\left\lfloor #1 \right\rfloor}
\def\sfloor#1{\lfloor #1 \rfloor}
\def\vv#1{{\bf #1}}
\def\d{{\rm d}}
\def\mand{\mbox{ and }}
\def\qmand{\quad\mbox{ and }\quad}
\def\mwith{\mbox{ with }}
\def\mfa{\mbox{ for all }}
\def\mmas{\mbox{ as }}
\def\qmmas{\quad\mbox{ as }\quad}
\def\adaptF#1{\{#1_t,\filF_t:0\leq t<\infty\}}
\def\probato{\stackrel{\proba}{\longrightarrow}}
\def\weakto{\Rightarrow}
\def\limn{\lim_{n\to\infty}}
\def\limm{\lim_{m\to\infty}}
\def\wt#1{\widetilde{#1}}
\def\wb#1{\widebar{#1}}
\def\Z{{\mathbb Z}}
\def\Zd{{\mathbb Z^d}}
\def\R{{\mathbb R}}
\def\N{{\mathbb N}}
\def\ifhead#1#2{\left\{\begin{array}{#1@{\quad\mbox{ if }\quad}#2}}
\def\ifend{\end{array}\right.}
\def\bbr#1{\llbracket#1\rrbracket}
\def\lleq{\preceq}
\def\ggeq{\succeq}
\def\indzd#1{\{#1_i\}_{i\in\Zd}}
\def\Zd{{\mathbb Z^d}}
\def\N{\mathbb N}
\def\mfa{\mbox{ for all }}
\title{On the asymptotic normality of kernel density estimators for linear random fields}
\author{Yizao Wang and Michael Woodroofe\\ \\Department of Statistics, University of Michigan}
\begin{document}\sloppy
\maketitle
\begin{abstract}
We establish sufficient conditions for the asymptotic normality of kernel density estimators, applied to causal linear random fields. Our conditions on the coefficients of linear random fields are weaker than known results, although our assumption on the bandwidth is not minimal. The proof is based on the $m$-approximation method. As a key step, we prove a central limit theorem for triangular arrays of stationary $m$-dependent random fields with unbounded $m$. We also apply a moment inequality recently established for stationary random fields.\medskip

\noindent Keywords: central limit theorem, $m$-dependence, moment inequality.

\noindent MSC2010: Primary: 60F05, 62G07; Secondary: 60G10
\end{abstract}
\section{Introduction}
Let $\{X_i\}_{i\in\Zd}, d\in\N$ be a stationary zero-mean random field, such that the marginal probability density function $p(\cdot)$ exists. We are interested in the Parzen--Rosenblatt kernel density estimator of $p(x)$ in the form of 
\equh\label{eq:fn}
f_n(x) = \frac1{n^db_n}\sum_{i\in\bbr{1,n}^d} K\bpp{\frac{x-X_{i}}{b_n}}, x\in\R\,.
\eque
Throughout this paper, we assume that the kernel $K:\R\to\R$ is
a bounded Lipschitz-continuous density function, and the bandwidth $b_n$ satisfies
\equh\label{eq:bn}
b_n\to 0\qmand n^db_n\to\infty \mmas n\to\infty\,.
\eque
We also write, for $a,b\in\Z$, $\bbr{a,b} \equiv \{a,a+1,\dots,b\}$.

This problem was first considered by Rosenblatt~\cite{rosenblatt56remarks} and Parzen~\cite{parzen62estimation}, in the case that $X_i$'s are independent and identically distributed (i.i.d.) random variables: in particular, one can show the {\it consistency}
\[
\limn f_n(x) = p(x),
\]
and the {\it asymptotic normality}
\equh\label{eq:AN}
(n^db_n)^{1/2}(f_n(x)-\esp f_n(x)) \weakto \calN(0,\sigma_x^2) \qmmas n\to\infty\,,
\eque
where $\sigma_x^2 = p(x)\int K^2(s)\d s$. See for example Silverman~\cite{silverman86density} for more references on density estimation problems with i.i.d.~data.

The case that $X_i$'s are dependent, however, has presented more challenges, and we focus on establishing the asymptotic normality~\eqref{eq:AN} in this paper.
 The dependent one-dimensional case has been considered by~Robinson~\cite{robinson83nonparametric}, Castellana and Leadbetter~\cite{castellana86smoothed}, Bosq et al.~\cite{bosq99asymptotic}, Wu and Mielniczuk~\cite{wu02kernel} and Dedecker and Merlev\'ede~\cite{dedecker02necessary}, among others. 
In particular,
Wu and Mielniczuk~\cite{wu02kernel} investigated thoroughly the case when $\{X_i\}_{i\in\Z}$ is a {\it linear process}. That is, 
\[
X_i = \sif k{-\infty} a_k\epsilon_{i-k}\,, i\in\Z\,,
\]
where $\sum_ka_k^2<\infty$ and the {\it innovations} $\{\epsilon_i\}_{i\in\Z}$ are i.i.d.~random variables.
Linear processes are important in the study of stationary processes, as any stationary process can be represented as linear combinations of linear processes (the so-called {\it superlinear processes}) with martingale-difference innovations (Voln\'y et al.~\cite{volny11central}). 


The asymptotic normality of kernel density estimators for random fields has been considered by Tran~\cite{tran90kernel},  Hallin et al.~\cite{hallin01density}, Cheng et al.~\cite{cheng08note} and El Machkouri~\cite{elmachkouri11asymptotic,elmachkouri11kernel}, among others. 
The extension of results in one dimension to high dimensions, however, is not trivial.
As summarized in Hallin et al.~\cite{hallin01density}, `the points of $\Z^d$ do not have a natural ordering. As a result, most techniques available for one-dimensional processes do not extend to random fields.' See more references in~\cite{hallin01density} on related discussions. 

In particular, a notorious difficulty for kernel density estimation of random fields, is that one often needs more assumptions on the bandwidth $b_n$ than the {\it minimal} one~\eqref{eq:bn}. This condition is minimal in the sense that it is the natural condition for the asymptotic normality~\eqref{eq:AN} to hold when $X_i$'s are i.i.d. To the best of our knowledge, only the recent results by El Machkouri~\cite{elmachkouri11asymptotic, elmachkouri11kernel} assume no other but minimal condition~\eqref{eq:bn} on $b_n$ for dependent random fields.  

In this paper, we focus on the kernel density estimation for {\it causal} linear random fields $\indzd X$ ($d\in \N$) in form of 
\equh\label{eq:Xij}
X_{ i} = \sum_{k\in\Zd,k\ggeq \vv 0}a_{k}\epsilon_{i-k}\,, i\in\Zd,
\eque
where $\sum_{i\ggeq \vv0}a_{ i}^2<\infty$ and $\indzd\epsilon$ are i.i.d.~zero-mean random variables with finite second moments. Throughout this paper, we let `$i\ggeq k$' denote `$i_\tau\geq k_\tau$ for all $\tau = 1,\dots,d$' for $i,k\in\Zd$, and write $\vv 0 = (0,\dots,0), \vv 1=(1,\dots,1)\in\Zd$.

We provide new conditions on the coefficient $\indzd a$ such that the asymptotic normality~\eqref{eq:AN} holds (see Theorem~\ref{thm:1} below), and compare with results obtained by Hallin et al.~\cite{hallin01density} and El Machkouri~\cite{elmachkouri11kernel}.  
In both cases, our conditions are weaker on the {\it coefficients} $\indzd a$. On the other hand, our condition on the {\it bandwidth}  improves the one in~\cite{hallin01density}, but it is still stronger than the minimal one~\eqref{eq:bn} assumed in~\cite{elmachkouri11kernel}.
We do not compare our result with Cheng et al.~\cite{cheng08note}, as there is a mistake in their proof (see Remark~\ref{rem:cheng} below).

Our proof is based on the $m$-approximation approach. As we will see, to address this problem one  has to establish an $m$-approximation with {\it unbounded} $m$ ($m_n\to \infty$ as $n\to\infty$). As a key step of our approach, we establish a central limit theorem for triangular arrays of stationary $m$-dependent random fields with unbounded $m$ (Theorem~\ref{thm:mn}). 
This result improves a central limit theorem established by Heinrich~\cite{heinrich88asymptotic}.
Our $m$-approximation method is also involved with certain moment inequalities for stationary random fields (Lemma~\ref{lem:Rn}). These moment inequalities are variations of the one established in~Wang and Woodroofe~\cite{wang11new}, based on the maximal inequalities for stationary sequences ($d=1$) by Peligrad and Utev~\cite{peligrad05new} (see also~\cite{peligrad07maximal, volny07nonadapted}). In general, the $m$-approximation method has been successful in proving central limit theorems for random fields (see e.g.~Cheng et al.~\cite{cheng06central}, Wang and Woodroofe~\cite{wang11new} and El Machkouri et al.~\cite{elmachkouri11central}). In particular, El Machkouri~\cite{elmachkouri11asymptotic, elmachkouri11kernel} also established $m$-approximations with unbounded $m$, combined with  Lindeberg's method (see e.g.~Rio~\cite{rio95about} and Dedecker~\cite{dedecker98central}), to prove asymptotic normality.

At last, we point out that when the asymptotic normality~\eqref{eq:AN} holds, the random variables are often said to have {\it weak dependence}, in the sense that they behave asymptotically as i.i.d.~random variables. On the other hand, when the dependence is strong enough, the normalization for obtaining limiting distributions is of different order from $n^db_n$ in~\eqref{eq:AN}, and the asymptotic limit may be no longer Gaussian (see e.g.~Cs\"orgo and Mielniczuk~\cite{csorgo95density} for one-dimensional case). 
These two regimes are sometimes referred to as  {\it short-range dependence} and {\it long-range dependence}, respectively.
For linear processes, 
Wu and Mielniczuk~\cite{wu02kernel} addressed both short-range and long-range dependence cases. For the linear random fields, however, to the best of our knowledge, the long-range dependence case remains open. It seems that the $m$-approximation method is limited to the short-range dependence case. Therefore, the long-range dependence case is beyond the scope of this paper.

The paper is organized as follows. Our assumptions and main results are presented in Section~\ref{sec:main}. Examples and comparison with other results are provided in Section~\ref{sec:examples}. Section~\ref{sec:mn} is devoted to the central limit theorem for triangular arrays of $m$-dependent random fields. 
Section~\ref{sec:CLT} establishes asymptotic normality by $m$-approximation.
Auxiliary proofs are given in Section~\ref{sec:proofs}. 

\section{Assumptions and the main result}\label{sec:main}
We first introduce our conditions. 
For each $m\in\N, i\in\Zd$, write
\equh\label{eq:Xijm}
X_{{ i},m} = \sum_{ k\in\bbr{0,m-1}^d}a_{ k}\epsilon_{ i- k}\qmand \wt X_{ i,m} = X_{ i} - \wt X_{ i,m}\,.
\eque
Let $p$, $p_m$ and $\wt p_m$ denote the probability density function of $X_{\vv 0}$, $X_{\vv 0,m}$ and $\wt X_{\vv 0,m}$, respectively. Let $p_{ i}$ and $p_{ i,m}$ denote the joint density functions of $(X_{\vv 0},X_{ i})$ and $(X_{\vv 0,m}, X_{ i,m})$, respectively. 
Our first condition is on the regularity of the density functions. Define the supremum $\wb p = \sup_x p(x)$, $\wb p_{ i} = \sup_{x,y}p_{ i}(x,y)$ and similarly $\wb p_m$ and $\wb p_{ i,m}$. 

\begin{Cond}\label{cond:D}
(i) The density functions $p$ and $\{p_m\}_{m\in\N}$ exist. They are $c_0$-Lipschitz continuous with certain constant $c_0<\infty$, independent of $m$ (i.e., $\max(|p(x)-p(y)|,|p_m(x)-p_m(y)|)\leq c_0|x-y|$). Furthermore, 
\equh\label{eq:p*}
\wb p<\infty\qmand \sup_m\wb p_m<\infty\,.
\eque
\noindent (ii) The density functions $p_{ i}$ and $p_{ i,m}$ exist for all $ i\neq \vv0, m\in\N$. Furthermore, 
\equh\label{eq:pij*}
\sup_{ i\neq \vv0} \wb p_{ i}<\infty\qmand \sup_m\sup_{ i\neq\vv0} \wb p_{ i,m}<\infty\,.
\eque
\end{Cond}

Condition~\ref{cond:D} can be satisfied, for example, by simply assuming that the probability density function $p_\epsilon$ of $\epsilon_{\vv 0}$ exists and is Lipschitz. This was assumed also in Wu and Mielniczuk~\cite{wu02kernel}.
\begin{Lem}\label{lem:pij}
If $p_\epsilon$ exists and is Lipschitz, then Condition~\ref{cond:D} holds.
\end{Lem}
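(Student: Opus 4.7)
My plan is to exploit the causal representation \eqref{eq:Xij}, under the normalization $a_{\vv 0}\neq 0$ (without loss of generality, since otherwise one can replace $\vv 0$ throughout the argument by the minimal $k_0\ggeq\vv 0$ with $a_{k_0}\neq 0$ and restrict to $m$ large enough that $k_0\in\bbr{0,m-1}^d$), so that $\epsilon_{\vv 0}$ enters both $X_{\vv 0}$ and $X_{\vv 0,m}$ with the same nonzero coefficient $a_{\vv 0}$ for every $m\geq 1$. This lets me express each marginal density as a convolution of $p_\epsilon$ against a distribution independent of $\epsilon_{\vv 0}$, and each bivariate density as a two-dimensional linear transformation whose Jacobian is $a_{\vv 0}^2$ regardless of $i$ and $m$.

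For part (i), I first note that a Lipschitz probability density on $\bbR$ is automatically bounded, since otherwise the Lipschitz lower bound around a sufficiently tall value would integrate to more than $1$. Decomposing $X_{\vv 0}=a_{\vv 0}\epsilon_{\vv 0}+R$ with $R$ independent of $\epsilon_{\vv 0}$ yields
\[
p(x)=\esp\bb{|a_{\vv 0}|\inv p_\epsilon\bpp{(x-R)/a_{\vv 0}}},
\]
from which $\wb p\leq\snn{p_\epsilon}_\infty/|a_{\vv 0}|$ and $|p(x)-p(y)|\leq(L_\epsilon/a_{\vv 0}^2)|x-y|$ follow by pushing the supremum/Lipschitz estimate of $p_\epsilon$ inside the expectation, where $L_\epsilon$ denotes the Lipschitz constant of $p_\epsilon$. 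Applying the identical identity to $X_{\vv 0,m}=a_{\vv 0}\epsilon_{\vv 0}+R_m$ gives the same bounds uniformly in $m$, proving \eqref{eq:p*} and the Lipschitz clause of Condition~\ref{cond:D}(i).

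For part (ii), fix $i\in\Zd\setminus\{\vv 0\}$. The key combinatorial observation is that $\epsilon_{\vv 0}$ appears in $X_i$ only when $i\ggeq\vv 0$ (with coefficient $a_i$), while $\epsilon_i$ appears in $X_{\vv 0}$ only when $i\lleq\vv 0$ (with coefficient $a_{-i}$), and for $i\neq\vv 0$ these cases are mutually exclusive. Up to swapping the roles of the two coordinates, we may therefore decompose
\[
X_{\vv 0}=a_{\vv 0}\epsilon_{\vv 0}+A,\qquad X_i=\alpha\epsilon_{\vv 0}+a_{\vv 0}\epsilon_i+B,
\]
where $\alpha\in\bbR$ and $(A,B)$ is independent of $(\epsilon_{\vv 0},\epsilon_i)$. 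The map $(\epsilon_{\vv 0},\epsilon_i)\mapsto(X_{\vv 0},X_i)$ is triangular with determinant $a_{\vv 0}^2$, so conditioning on $(A,B)$ gives
\[
p_i(x,y)=\esp\bb{a_{\vv 0}^{-2}p_\epsilon\bpp{(x-A)/a_{\vv 0}}p_\epsilon\bpp{(y-\alpha(x-A)/a_{\vv 0}-B)/a_{\vv 0}}},
\]
whence $\wb p_i\leq\snn{p_\epsilon}_\infty^2/a_{\vv 0}^2$ uniformly in $i\neq\vv 0$, and an identical argument for $(X_{\vv 0,m},X_{i,m})$ delivers the $m$-uniform bound in \eqref{eq:pij*}. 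The only delicate point throughout is this combinatorial bookkeeping: once one verifies that at most one of $\epsilon_{\vv 0},\epsilon_i$ is shared between $X_{\vv 0}$ and $X_i$, everything else reduces to boundedness and Lipschitz continuity of $p_\epsilon$ passing through a conditional expectation.
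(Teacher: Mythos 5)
Your proof is correct, and it takes a genuinely more direct route than the paper's. For part (i) the paper invokes Wu and Mielniczuk~\cite{wu02kernel} for the existence and Lipschitz continuity of $p$ and $p_m$, and then obtains $\sup_m\wb p_m<\infty$ indirectly, from the uniform estimate $\sup_y|p_m(y)-p(y)|\leq C\esp|\wt X_{\vv 0,m}|$ together with the boundedness of the limit $p$; for part (ii) it derives $p_{i}(x,y)=\esp[p_\epsilon(x-R)p_\epsilon(y-R_{i}-a_{i}x)]$ (the case $a_{\vv 0}=1$ of your formula) and then deduces~\eqref{eq:pij*} from two uniform limits --- $p_i(x,y)\to p(x)p(y-a_ix)$ as $|i|_\infty\to\infty$ and $p_{i,m}\to p_i$ uniformly in $i$ as $m\to\infty$ --- whose proofs need the moment bound~\eqref{eq:wu02} and a conditional-expectation decomposition of $R_i$. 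You instead read off all the required suprema, with explicit constants $\snn{p_\epsilon}_\infty/|a_{\vv 0}|$, $L_\epsilon/a_{\vv 0}^2$ and $\snn{p_\epsilon}_\infty^2/a_{\vv 0}^2$ uniform in $i$ and $m$, directly from the convolution representation and the triangular change of variables with Jacobian $a_{\vv 0}^2$; your combinatorial observation that at most one of $\epsilon_{\vv 0},\epsilon_i$ is shared between $X_{\vv 0}$ and $X_i$ when $i\neq\vv 0$ is precisely the (unstated) reason the paper's own identity~\eqref{eq:pij} is legitimate, so your write-up is cleaner on that point. What the paper's longer detour buys is the quantitative bound~\eqref{eq:pm-p}, which is reused later in the proof of Lemma~\ref{lem:mn}; your argument does not produce it, but it is not needed for the present statement. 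The only caveat, which you share with the paper (it simply sets $a_{\vv 0}=1$), is the normalization $a_{\vv 0}\neq 0$: if all $a_k$ with $k\in\bbr{0,m-1}^d$ vanish for some small $m$, then $X_{\vv 0,m}$ is degenerate and $p_m$ does not exist at all, so some such convention is unavoidable, and your explicit handling of it is if anything more careful than the original.
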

The proof is deferred to Section~\ref{sec:proofs}.

Our second condition is on the decay of coefficients and bandwidth $b_n$. Define
\[
A_{ k} = \bpp{\sum_{ i\ggeq  k} a_{ i}^2}^{1/2}, k\in\Zd \mand B_{m} = \bpp{\sum_{\substack{i\in\bbr{0,\infty}^d\\ |i|_\infty\geq m}}a_{ i}^2}^{1/2}, m\in\N\,,
\]
with $|i|_\infty = \max_{\tau = 1,\dots,d}|i_\tau|$.
Write
\[
\Delta_n = \sum_{k\in\bbr{1,n}^d} \frac{A_{k-\vv 1}}{\prodd \tau1dk_\tau^{1/2}}\,.
\]
\begin{Cond}\label{cond:C}
There exist a sequence of integers $\indn m$ such that $m_n\to\infty$ as $n\to\infty$, and the following limits hold:
\eqnhn\label{eq:C1}
\limn b_n^{1/2}\Delta_n & = & 0\,,\\
\label{eq:C2}
\limn \frac{B_{m_n}}{b_n} & = & 0\,,\\
\limn m_n^db_n & = & 0\,,\label{eq:C3}
\\
\label{eq:C4}
\limn\frac{m_n^{d}\log^{d} n}{n^{d}b_n} & = & 0\,.
\eqnen

\end{Cond}
\begin{Thm}\label{thm:1}
If Conditions~\ref{cond:D} and~\ref{cond:C} hold and $\esp(|\epsilon_{\vv 0}|^\alpha)<\infty$ for some $\alpha>2$, then
the asymptotic normality~\eqref{eq:AN} holds.
\end{Thm}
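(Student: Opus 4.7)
The plan is to prove Theorem~\ref{thm:1} by the $m$-approximation scheme outlined in the introduction, combining the $m$-dependent CLT of Theorem~\ref{thm:mn} with the moment inequality of Lemma~\ref{lem:Rn}. Writing
\[
Y_{i,n} \defe K\bpp{\frac{x-X_i}{b_n}} - \esp K\bpp{\frac{x-X_i}{b_n}},\qquad Y_{i,n}^{(m)} \defe K\bpp{\frac{x-X_{i,m}}{b_n}} - \esp K\bpp{\frac{x-X_{i,m}}{b_n}},
\]
one has $(n^d b_n)^{1/2}(f_n(x)-\esp f_n(x)) = (n^db_n)^{-1/2}\sum_{i\in\bbr{1,n}^d}Y_{i,n}$, which I would split into $S_n^{(m_n)}+T_n^{(m_n)}$ with
\[
S_n^{(m_n)} \defe \frac{1}{(n^db_n)^{1/2}}\sum_{i\in\bbr{1,n}^d}Y_{i,n}^{(m_n)},\qquad T_n^{(m_n)} \defe \frac{1}{(n^db_n)^{1/2}}\sum_{i\in\bbr{1,n}^d}\bpp{Y_{i,n}-Y_{i,n}^{(m_n)}},
\]
for $\indn m$ supplied by Condition~\ref{cond:C}. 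Because $X_{i,m_n}$ involves only the innovations $\{\epsilon_{i-k}\}_{k\in\bbr{0,m_n-1}^d}$, the field $\{Y_{i,n}^{(m_n)}\}_i$ is stationary and $m_n$-dependent, placing it in the scope of Theorem~\ref{thm:mn}.

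The first step would be to show $S_n^{(m_n)}\weakto \calN(0,\sigma_x^2)$ via Theorem~\ref{thm:mn}. A change of variables yields $b_n^{-1}\var Y_{\vv 0,n}^{(m_n)}=\int K^2(s)p_{m_n}(x-b_ns)\,\d s+O(b_n)$, and the uniform Lipschitz/boundedness bound in Condition~\ref{cond:D}(i), coupled with the $L^2$ convergence $X_{\vv 0,m_n}\to X_{\vv 0}$ forcing $p_{m_n}(x)\to p(x)$, produces the target $\sigma_x^2=p(x)\int K^2(s)\,\d s$. For covariances, Condition~\ref{cond:D}(ii) yields $\sabs{\cov(Y_{i,n}^{(m_n)},Y_{j,n}^{(m_n)})}\leq C b_n^2$ for $i\neq j$, and only the $O(n^dm_n^d)$ pairs with $\sabs{i-j}_\infty\leq m_n$ are non-independent (the rest use disjoint innovations), so their total contribution to $\var(S_n^{(m_n)})$ is $O(m_n^d b_n)\to 0$ by~\eqref{eq:C3}. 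The Lindeberg-type hypothesis of Theorem~\ref{thm:mn} follows from boundedness of $K$, the moment condition $\esp\sabs{\epsilon_{\vv 0}}^\alpha<\infty$, and~\eqref{eq:C4}.

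The second step is to show $T_n^{(m_n)}\to 0$ in probability, which by Chebyshev's inequality reduces to $\esp\babs{\sum_{i\in\bbr{1,n}^d}(Y_{i,n}-Y_{i,n}^{(m_n)})}^2=o(n^db_n)$. Applying Lemma~\ref{lem:Rn} to the stationary centered random field $\{Y_{i,n}-Y_{i,n}^{(m_n)}\}_i$, whose dependence is governed by the tail coefficients $\{a_k\}_{k\notin\bbr{0,m_n-1}^d}$, produces an $L^2$ bound that combines the Lipschitz property of $K$ (supplying an unavoidable $b_n^{-1}$) with partial coefficient sums of the linear field. One piece is absorbed by $B_{m_n}/b_n\to 0$ from~\eqref{eq:C2}, accounting for the truncation tail, while another, whose partial sums arrange into $\Delta_n$, is handled by $b_n^{1/2}\Delta_n\to 0$ from~\eqref{eq:C1}. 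Slutsky's theorem then assembles the two pieces into $(n^db_n)^{-1/2}\sum_i Y_{i,n}\weakto \calN(0,\sigma_x^2)$.

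The main obstacle is precisely this $L^2$ estimate for $T_n^{(m_n)}$: a crude pointwise bound $|Y_{i,n}-Y_{i,n}^{(m_n)}|\leq Cb_n^{-1}|\wt X_{i,m_n}|$ produces a prohibitive $B_{m_n}^2/b_n^2$ per summand, whose sum over $\bbr{1,n}^d$ would require much more stringent decay than~\eqref{eq:C2} provides. The role of Lemma~\ref{lem:Rn} is to replace this naive bound by one driven by coefficient partial sums $A_k$ that exploit cancellation in the centered differences, and the four apparently technical conditions~\eqref{eq:C1}--\eqref{eq:C4} of Condition~\ref{cond:C} are calibrated precisely so that each term in the resulting estimate vanishes.
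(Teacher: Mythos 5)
Your proposal follows essentially the same route as the paper's own proof: the identical $m_n$-approximation decomposition, with the $m_n$-dependent block handled by Theorem~\ref{thm:mn} and the remainder controlled via Lemma~\ref{lem:Rn} together with a single-site $L^2$ estimate (the paper's Lemma~\ref{lem:mn}), and with the four parts of Condition~\ref{cond:C} deployed in the same places. The only under-specified point is in the first step, where verifying hypothesis~\eqref{eq:Yn} of Theorem~\ref{thm:mn} and the Lindeberg condition~\eqref{eq:LF2} both require the $L^\alpha$ moment inequality~\eqref{eq:momentp} (together with~\eqref{eq:C1}) applied to block sums --- boundedness of $K$ combined with~\eqref{eq:C4} alone would force a strictly stronger bandwidth condition --- but the ingredients you list are the right ones and this is exactly how the paper proceeds.
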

We will prove Theorem~\ref{thm:1} in Section~\ref{sec:CLT}. We conclude this section with a few remarks.
\begin{Rem}
We briefly comment on each condition in Condition~\ref{cond:C}.
\begin{itemize}
\item[(i)] Condition~\eqref{eq:C1} is slightly weaker than
\[
\Delta_\infty\equiv \sum_{k\in\bbr{1,\infty}^d}\frac{A_{k-\vv 1}}{\prodd\tau1dk_\tau^{1/2}}<\infty\,.
\]
It was shown in~\cite{wang11new}, Corollary 1 that, the above condition implies the asymptotic normality of $\sum_{i\in\bbr{1,n}^d}[f(X_{ i}) - \esp f(X_{\vv 0})]/n^{d/2}$ for Lipschitz continuous function $f$ such that $\esp f^2(X_{\vv 0})<\infty$.
\item[(ii)] Condition~\eqref{eq:C2} implies that
\equh\label{eq:C2'}
\limn \frac{\esp|\wt X_{\vv 0,m_n}|}{b_n} = 0\,.
\eque
Indeed, Wu~\cite{wu02central}, Lemma 4 showed that for i.i.d.~zero-mean random variables $\{\epsilon_i\}_{i\in\Z}$ with $\esp(|\epsilon_{\vv 0}|^{2\vee 2p})<\infty, p>0$, 
\equh\label{eq:wu02}
\esp\bpp{\babs{\sum_{i}a_{i}\epsilon_{i}}^{2p}}\leq C\bpp{\sum_{i}a_{i}^2}^p\,.
\eque
Intuitively, $\wt X_{\vv 0,m_n}$ can be viewed as the remainder of $X_{\vv 0}$ after the $m_n$-truncation. Condition~\eqref{eq:C2'} tells that $m_n$ needs to tend to infinity fast enough, so that the central limit theorem holds. 
\item [(iii)] Conditions~\eqref{eq:C3} and~\eqref{eq:C4} are useful when we apply a central limit theorem for $m$-dependent random variables with unbounded $m$ in Proposition~\ref{prop:1} below.
\end{itemize}
\end{Rem}
Throughout this paper, let $C$ denote constants that do not depend on $ i,k,m,n,x,y$. The value of $C$ may change from line to line.

\section{Examples and discussions}\label{sec:examples}
Theorem~\ref{thm:1}, and particularly Condition~\ref{cond:C}, is not convenient to apply for concrete models. Instead, we provide a corollary for practical reason.  Write
\[
A_{[n]} = \max\{A_{n,1,\dots,1},A_{1,n,1,\dots,1},\dots,A_{1,\dots,1,n}\}\,.
\]
\begin{Coro}\label{coro:1}
Suppose $A_{[n]} \leq c_1n^{-\beta}$ and $\beta>0$, and $b_n = c_2n^{-\gamma}$. Then a sufficient condition such that Condition~\ref{cond:C} holds is 
\equh\label{eq:gammabeta}
\gamma<\frac{d\beta}{d+\beta} \qmand \beta>d\,.
\eque
Consequently, if $\esp(|\epsilon_{\vv 0}|^\alpha)<\infty$ for some $\alpha>2$, and Condition~\ref{cond:D} and~\eqref{eq:gammabeta} hold, then the asymptotic normality~\eqref{eq:AN} follows.
\end{Coro}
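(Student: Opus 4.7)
My plan is to verify each of the four limits in Condition~\ref{cond:C} under the polynomial hypotheses $A_{[n]} \leq c_1 n^{-\beta}$ and $b_n = c_2 n^{-\gamma}$, and then choose a polynomial truncation sequence $m_n = \lfloor n^\delta \rfloor$ making \eqref{eq:C2}--\eqref{eq:C4} simultaneously compatible. As a preliminary I would establish that for $k \ggeq \vv 1$ one has $A_k \leq c_1 |k|_\infty^{-\beta}$ (since $\{i : i \ggeq k\}$ is contained in the index set defining $A_{[|k|_\infty]}$ when we project onto the axis realising $|k|_\infty$), and from the union-type bound $B_m^2 \leq \sum_{\tau=1}^d \sum_{i\ggeq \vv 0,\,i_\tau\ge m} a_i^2$ extract $B_m \leq C m^{-\beta}$.

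Condition \eqref{eq:C1} does not involve $m_n$, so I would dispose of it first. Splitting $\Delta_n$ according to which coordinate attains the maximum of $k$ and comparing the sum to an integral gives
\begin{equation*}
\Delta_n \;\leq\; C + C\sum_{k=1}^n k^{-\beta-1/2}\cdot k^{(d-1)/2} \;=\; C + C\sum_{k=1}^n k^{(d-2)/2-\beta},
\end{equation*}
which is bounded as soon as $\beta > d/2$. Since $\beta > d$ under our hypothesis, $\Delta_n = O(1)$ and hence $b_n^{1/2}\Delta_n = O(n^{-\gamma/2}) \to 0$.

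Substituting $m_n = \lfloor n^\delta \rfloor$ into \eqref{eq:C2}--\eqref{eq:C4} and ignoring logarithmic factors, the three conditions respectively reduce to
\begin{equation*}
n^{\gamma-\delta\beta}\to 0,\qquad n^{d\delta-\gamma}\to 0,\qquad n^{d\delta+\gamma-d}\to 0,
\end{equation*}
which require $\delta > \gamma/\beta$, $\delta < \gamma/d$, and $\delta < 1 - \gamma/d$ respectively. The interval $(\gamma/\beta,\min(\gamma/d,\,1-\gamma/d))$ is nonempty precisely when both $\gamma/\beta < \gamma/d$ (i.e.\ $\beta > d$) and $\gamma/\beta + \gamma/d < 1$ (i.e.\ $\gamma < d\beta/(d+\beta)$) hold, which are exactly the hypotheses of \eqref{eq:gammabeta}. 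Any $\delta$ in this interval yields an $m_n$ satisfying Condition~\ref{cond:C}, and Theorem~\ref{thm:1} then delivers the asymptotic normality.

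The main obstacle is the preliminary step of promoting the axial hypothesis on $A_{[n]}$ to uniform bounds on $A_k$ and $B_m$: the definition of $A_{[n]}$ only constrains tails along the $d$ axes with the off-axis coordinates pinned at $1$, whereas $B_m$ picks up contributions from indices with some $i_\sigma = 0$. Controlling these boundary terms cleanly (so that the $m^{-\beta}$ rate is preserved up to a $d$-dependent constant) is the only non-routine piece; the remaining verifications are bookkeeping with polynomial orders.
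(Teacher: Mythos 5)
Your argument is essentially the paper's own proof: the same ansatz $m_n=\lfloor n^\delta\rfloor$, the same reduction of~\eqref{eq:C2}--\eqref{eq:C4} to the nonemptiness of the interval $(\gamma/\beta,\min(\gamma/d,1-\gamma/d))$, which is exactly~\eqref{eq:gammabeta}, and the same observation that $\beta>d$ gives $\Delta_\infty<\infty$ (your integral comparison showing $\beta>d/2$ suffices for that is a correct refinement). The one step you flag as non-routine --- promoting the axial bound on $A_{[n]}$ to bounds on $B_m$ and on general $A_{k}$, where indices with some coordinate equal to $0$ escape the index sets defining $A_{[n]}$ --- is a genuine subtlety, but it is present in the paper's proof as well, which simply asserts that $B_{m_n}$ is of the same order as $A_{[m_n]}$ and that $\beta>d$ implies $\Delta_\infty<\infty$; in the paper's examples this is harmless because the decay of $B_n$ is always checked directly from the model, and the corollary is cleanest if the hypothesis is read as a bound on the full one-sided tails. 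Apart from making that reading explicit, your bookkeeping is complete and correct.
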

\begin{proof}
Assume that $m_n$ takes the form of $\sfloor{n^\delta}$. Observe that $B_{m_n}$ is of the same order of $A_{[m_n]}$ as $n\to\infty$.
Then, the limit conditions~\eqref{eq:C2},~\eqref{eq:C3} and~\eqref{eq:C4} are implied by 
\[
\limn n^{-\beta\delta+\gamma} + n^{d\delta-\gamma} + n^{\delta-1+\gamma/d} = 0\,,
\]
which is equivalent to 
$\gamma/\beta<\delta <\min\{\gamma/d,1-\gamma/d\}$.
Since $\beta>d$ implies that $\Delta_\infty <\infty$, the desired result follows.
\end{proof}
\begin{Rem}
Under the assumptions of Corollary~\ref{coro:1}, Condition~\eqref{eq:gammabeta} is very close to necessary for Condition~\ref{cond:C} to holds. Indeed, if $A_{[n]} = l(n)n^{-\beta}$ with $\limn l(n) = c_2>0$, then the same argument above yields that Condition~\ref{cond:C} is equivalent to~\eqref{eq:gammabeta}.
\end{Rem}
Below, we provide examples of coefficients so that Condition~\ref{cond:C} holds. We assume that $b_n = n^{-\gamma}$ for some $\gamma\in(0,d)$.
\begin{Example}
We compare our conditions and the ones by Hallin et al.~\cite{hallin01density}. They considered the case that $|a_{ i}|\leq C|i|_\infty^{-q},  i\ggeq \vv0$. Then, they require 
\equh\label{eq:hallin}
q>\max(d+3,2d+1/2) \qmand \limn n^db_n^{(2q-1+6d)/(2q-1-4d)}  = \infty\,.
\eque
Our condition~\eqref{eq:gammabeta} imposes weaker assumption in this case (with $b_n = n^{-\gamma}$). First, observe that 
\[
A_{n,1,\dots,1}^2 \leq B_n^2\leq  C\sif {i}n{i^{d-1} i^{-2q}} \leq Cn^{d-2q}\,.
\]
We can apply Corollary~\ref{coro:1} with $\beta = q-d/2$. Then,~\eqref{eq:gammabeta} becomes 
\equh\label{eq:hallin1}
q>\frac{3d}2\qmand \gamma<d\frac{q-d/2}{q+d/2} \,.
\eque
Thus, to establish the asymptotic normality~\eqref{eq:AN}, our condition~\eqref{eq:hallin1} is less restrictive than~\eqref{eq:hallin} on both $q$ and $\gamma$. 
\end{Example}
\begin{Example}
We compare our conditions and the ones by El Machkouri~\cite{elmachkouri11kernel}. Note that his results apply to general stationary random fields and the linear random fields are a specific case. In particular, he showed that for causal linear random fields, if 
\equh\label{eq:q}
\sum_{i\in\Zd} |i|_\infty^q |a_{ i}|<\infty 
\eque
with $q = 5d/2$, then the asymptotic normality follows.

In this case, our condition on the {\it coefficients} is weaker, requiring only $q>d$. Indeed, suppose~\eqref{eq:q} holds with some $q>0$.  
Then, to apply Corollary~\ref{coro:1}, it suffices to observe
\[
A_{n,1}^2 = \sif {i_1}n\sum_{\substack{i_2,\dots,i_d\in\N}}|a_{ i}|^2\leq Cn^{-2q}\sif {i_1}n\sum_{\substack{i_2,\dots,i_d\in\N}} |i|_\infty^{2q}|a_{ i}|^2< Cn^{-2q},
\]
and take $\beta = q$.

At the same time, our result requires $\gamma<dq/(q+d)$ for the bandwidth, in addition to the minimal one~\eqref{eq:bn} assumed in~\cite{elmachkouri11kernel}. Recall also that we assume $\esp(|\epsilon_{\vv 0}|^\alpha)<\infty$ for some $\alpha>2$, while El Machkouri's result needs only finite-second-moment assumption on $\epsilon_{\vv 0}$.
\end{Example}

\begin{Rem}
Finally, we compare our result to Wu and Mielniczuk~\cite{wu02kernel}. In the one-dimensional case, to have asymptotic normality they assume only finite variance of $\epsilon_{\vv0}$ and weaker assumption on the coefficient: 
\equh\label{eq:q0}
\sif i0 |a_i|<\infty\,.
\eque
This is weaker than our condition in one dimension (with $q>d=1$ in~\eqref{eq:q}).

Wu and Mielniczuk followed a martingale approximation approach. It remains an open question that in high dimension, whether the condition $q>d$ in~\eqref{eq:q} can be improved to match~\eqref{eq:q0} in dimension one.
\end{Rem}
\section{A central limit theorem for $m$-dependent random fields}\label{sec:mn}
In this section, we prove a central limit theorem for stationary triangular arrays of $m$-dependent random fields. 
Throughout this section, let $\{Y_{n,i}:i\in\N^d\}_{n\in\N}$ denote stationary zero-mean triangular arrays. That is, for each $n$, $\{Y_{n,i}\}_{i\in\N^d}$ is stationary and $Y_{n,i}$ has zero mean.   Furthermore, we assume that $\{Y_{n,i}\}_{i\in\N^d}$ is $m_n$-dependent in the sense that $Y_{n,i}$ and $Y_{n,j}$ are independent if $|i-j|_\infty\geq m$. We provide conditions such that
\equh\label{eq:SnY}
\frac{S_n(Y)}{n^{d/2}} \equiv \frac{\sum_{i\in\bbr{1,n}^d}Y_{n,i}}{n^{d/2}} \weakto\calN(0,\sigma^2)\mmas n\to\infty.
\eque
A key condition is the following:
\equh\label{eq:Yn}
\bnn{\sum_{i\in\N^d, \vv 1\lleq i\lleq j}Y_{n,i}}_2 \leq C(j_1\cdots j_d)^{1/2}\mfa n\in \N, j\in\N^d\,.
\eque
\begin{Rem}
Inequality~\eqref{eq:Yn} has been established, under various conditions on the dependence of stationary random fields, by Dedecker~\cite{dedecker01exponential}, Wang and Woodroofe~\cite{wang11new}, and El Machkouri et al.~\cite{elmachkouri11central}, among others.
\end{Rem}
\begin{Thm}\label{thm:mn}
Suppose that there exists a constant $C$ such that~\eqref{eq:Yn} holds.
If there exists a sequence $\indn l\subset\N$, $m_n/l_n\to 0$ and $l_n/n\to 0$ as $n\to\infty$, such that 
\eqnhn\label{eq:LF1}
& & \limn\frac1{l_n^d}\esp\bbb{\bpp{\sum_{k\in\bbr{1,l_n}^d}Y_{n,k}}^2} = \sigma^2\,,\\
\label{eq:LF2}
& & \limn\frac1{l_n^d}\esp\bbb{\bpp{\sum_{k\in\bbr{1,l_n}^d}Y_{n,k}}^2\ind\bccbb{\babs{\sum_{k\in\bbr{1,l_n}^d}Y_{n,k}}> n^{d/2}\epsilon}} = 0\,,
\eqnen
for all $\epsilon>0$, 
then~\eqref{eq:SnY} holds.
\end{Thm}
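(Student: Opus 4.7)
The plan is a $d$-dimensional Bernstein block decomposition, combining an $L^2$-negligibility argument for the ``corridor'' blocks with Lindeberg's classical CLT for the ``big'' blocks. Along each coordinate axis I alternate large intervals of length $l_n$ with small corridors of width $m_n$; the resulting product partition decomposes $\bbr{1,n}^d$ (up to a boundary leftover of volume $O(n^{d-1}(l_n+m_n))$, negligible since $l_n/n\to 0$) into rectangles indexed by $s\in\{0,1\}^d$, where $s_\tau=0$ means ``large'' along axis $\tau$ and $s_\tau=1$ means ``corridor''. Two distinct rectangles of the \emph{same} type $s$ necessarily differ along some axis, and along that axis are separated by at least $m_n$, so $m_n$-dependence makes them independent. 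Writing $S_n^{(s)}$ for the sum of $Y_{n,k}$ over all type-$s$ rectangles, I arrive at $S_n = S_n^{(\vv 0)} + R_n$ with $R_n$ collecting all $s\neq \vv 0$ and the boundary remainder.

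For each $s$, one rectangle of type $s$ has variance at most $C\prod_\tau l_n^{1-s_\tau}m_n^{s_\tau}$ by~\eqref{eq:Yn} and stationarity; independence within type then gives, for $N_n:=\lfloor n/(l_n+m_n)\rfloor^d$,
\[
\frac{1}{n^d}\mathrm{Var}(S_n^{(s)}) \leq C\prod_{\tau=1}^d \left(\frac{l_n}{l_n+m_n}\right)^{\!1-s_\tau}\!\left(\frac{m_n}{l_n+m_n}\right)^{\!s_\tau},
\]
which tends to zero for each $s\neq \vv 0$ because $m_n/l_n\to 0$. Summing the finitely many nonzero types via Cauchy--Schwarz, and absorbing the boundary leftover through a further application of~\eqref{eq:Yn}, yields $\|R_n\|_2^2 = o(n^d)$.

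For the main term, $S_n^{(\vv 0)}$ is a sum of $N_n$ i.i.d.\ copies of $T_n:=\sum_{k\in\bbr{1,l_n}^d}Y_{n,k}$. The variance of the normalized sum is $(N_n/n^d)\,\esp[T_n^2]\to\sigma^2$ by~\eqref{eq:LF1} together with $N_n l_n^d/n^d\to 1$ (which follows from $m_n/l_n\to 0$); likewise the Lindeberg sum
\[
\sum_{k=1}^{N_n}\esp\bbb{(T_n^{(k)})^2/n^d\,\ind\sccbb{|T_n^{(k)}|>\epsilon n^{d/2}}} = \frac{N_n}{n^d}\,\esp\bbb{T_n^2\,\ind\sccbb{|T_n|>\epsilon n^{d/2}}}
\]
vanishes by~\eqref{eq:LF2}. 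Lindeberg's CLT then gives $S_n^{(\vv 0)}/n^{d/2}\weakto\calN(0,\sigma^2)$, and Slutsky combined with $R_n/n^{d/2}\to 0$ in $L^2$ yields~\eqref{eq:SnY}. The main obstacle I expect is not conceptual but bookkeeping: carefully handling the boundary leftover inside $R_n$, and keeping track of the scaling factor $N_n l_n^d/n^d\to 1$ so that conditions~\eqref{eq:LF1} and~\eqref{eq:LF2}, stated for cubes of side exactly $l_n$, match the i.i.d.\ sum one actually wants.
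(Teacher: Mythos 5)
Your proposal is correct and follows essentially the same route as the paper: partition $\bbr{1,n}^d$ into big blocks of side $l_n$ separated by corridors of width $m_n$, show the corridor and boundary contributions are $o(n^{d/2})$ in $L^2$ using~\eqref{eq:Yn}, and apply the Lindeberg central limit theorem to the resulting i.i.d.\ big-block sums under~\eqref{eq:LF1} and~\eqref{eq:LF2}. Your bookkeeping of the corridor types $s\in\{0,1\}^d$ is in fact more explicit than the paper's, which only sketches the bound via rectangles of size $n^{d-r}m_n^{r}$.
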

\begin{proof}
Consider partial sums over big blocks of size $l_n^d$, denoted by
\[
\eta_{n,k} = \sum_{i\in\bbr{1,l_n}^d}Y_{n,i+k(l_n+m_n)}\,, k\in \N^d.
\]
In this way, for each $n\in\N$, $\{\eta_{n,k}\}_{k\in\N^d}$ are i.i.d., as we separate neighboring blocks by distance $m_n$, and $\{Y_{n,i}\}_{i\in\Zd}$ are $m_n$-dependent.
Set 
\[
S_n(\eta) = \sum_{k\in\bbr{0,\floor{n/(l_n+m_n)}-1}^d}\eta_{n,k}\,, n\in\N\,.
\]
Then,~\eqref{eq:Yn} implies that
\[
\bnn{\frac{S_n(Y)}{n^{d/2}} - \frac{S_n(\eta)}{n^{d/2}}}_2\to 0 \mmas n\to\infty\,.
\]
To see this, for the sake of simplicity, we consider the case $n/(l_n+m_n) = \floor{n/(l_n+m_n)}$. Indeed, by the triangular inequality, the left-hand side above can be bounded by sums in form of $\nn{\sum_{i\in B}Y_{n,i}}_2/n^{d/2}$, where $B$ can be a rectangle of size $n^{d-r}m_n^r$ with $r\in\{1,\dots,d-1\}$. 
Focusing on the dominant term with $r=1$, we then bound the left-hand side above by $C(n/(l_n+m_n))^{1/2}(n^{d-1}m_n)^{1/2}/n^{d/2} = Cm_n^{1/2}/(l_n+m_n)^{1/2}\to \infty$ as $n\to\infty$.

As a consequence, it suffices to show $S_n(\eta)/n^{d/2}\weakto \calN(0,\sigma^2)$. This, under conditions~\eqref{eq:LF1} and~\eqref{eq:LF2},  follows from the standard central limit theorem for triangular arrays of independent random variables (see e.g.~\cite{durrett96probability}, Chapter 2, Theorem 4.5).
\end{proof}
\begin{Rem}
Central limit theorems for $m_n$-dependent random fields has been considered by Heinrich~\cite{heinrich88asymptotic}. His result has been recently applied, with $m_n = m$ fixed, by El Machkouri et al.~\cite{elmachkouri11central} to establish a central limit theorem for stationary random fields. 

Our application requires us to take $m_n\to\infty$ (see Remark~\ref{rem:cheng} below). In this case our condition in Theorem~\ref{thm:mn} is weaker than Heinrich's. In particular, he assumed
\equh\label{eq:heinrich}
\limn \frac{m_n^{2d}}{n^d}\sum_{i\in\bbr{1,n}^d} \esp\bpp{{Y_{n,i}^2}\indd{|Y_{n,i}|>\epsilon n^{d/2}m_n^{-2d}}} = 0\,,\mfa \epsilon>0\,.
\eque
This is stronger than~\eqref{eq:LF2}.

\end{Rem}


\section{Asymptotic normality by $m$-approximation}\label{sec:CLT}
In this section, we prove Theorem~\ref{thm:1} by an $m$-approximation argument. Fix $x\in\R$ and write
\[
Z_{n, i} = \frac1{\sqrt {b_n}}K\bpp{\frac{x-X_{ i}}{b_n}}\qmand \zeta_{n, i} = \frac1{\sqrt {b_n}}K\bpp{\frac{x-X_{ i,m_n}}{b_n}}, i\in\Zd\,.
\]
In this way, $\{\zeta_{n, i}\}_{ i\in\Zd}$ are $m_n$-dependent. We will use $\{\zeta_{n, i}: i\in\Zd\}_{n\in\N}$ to approximate $\{Z_{n, i}: i\in\Zd\}_{n\in\N}$. We also write $\wb Z_{n, i} = Z_{n, i}-\esp Z_{n, i}$ and $\wb \zeta_{n, i} = \zeta_{n, i} - \esp \zeta_{n, i}$. 
Setting  
\[
S_{n}(\wb \zeta) = \sum_{i\in\bbr{1,n}^d} \wb \zeta_{n, i} \qmand 
S_{n}(\wb Z-\wb\zeta) = \sum_{i\in\bbr{1,n}^d} (\wb Z_{n, i} - \wb \zeta_{n, i})\,,
\]
we decompose
\equh\label{eq:B}
(n^db_n)^{1/2}(f_n(x)-\esp f_n(x)) = \frac{S_{n}(\wb\zeta)}{n^{d/2}} + \frac{S_{n}(\wb Z-\wb\zeta)}{n^{d/2}}\,.
\eque
To prove Theorem~\ref{thm:1}, it suffices to establish the following two results.
\begin{Prop}\label{prop:1}Under Condition~\ref{cond:D} and~\eqref{eq:C1},~\eqref{eq:C3},~\eqref{eq:C4} of Condition~\ref{cond:C},
\equh\label{eq:MnB}
\frac{S_{n}(\wb\zeta)}{n^{d/2}}\weakto \calN(0,\sigma_{x}^2)\,.
\eque
\end{Prop}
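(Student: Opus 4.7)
The plan is to apply Theorem~\ref{thm:mn} with $Y_{n,i} = \wb\zeta_{n,i}$. Since $\zeta_{n,i}$ is a function of $\{\epsilon_{i-k}\}_{k \in \bbr{0,m_n-1}^d}$, the centred array $\{\wb\zeta_{n,i}\}_{i \in \Zd}$ is stationary, zero-mean and $m_n$-dependent. It remains to verify the moment bound~\eqref{eq:Yn}, and to exhibit a block-size sequence $l_n$ with $m_n/l_n \to 0$ and $l_n/n \to 0$ for which both~\eqref{eq:LF1} and~\eqref{eq:LF2} hold (with $\sigma^2 = \sigma_x^2$).

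For~\eqref{eq:Yn}, I would apply Lemma~\ref{lem:Rn} to the $m_n$-truncated linear field and the Lipschitz observable $y \mapsto K\bpp{(x-y)/b_n}/\sqrt{b_n}$. A change of variables together with Condition~\ref{cond:D}(i) shows this observable has $L^2$-norm of order one, while its Lipschitz constant is of order $b_n^{-3/2}$; the resulting moment inequality takes the form
\[
\bnn{\sum_{i \in \bbr{1,j}^d} \wb\zeta_{n,i}}_2 \leq C(j_1\cdots j_d)^{1/2}\bpp{1 + b_n^{1/2}\Delta_{|j|_\infty}},
\]
so the uniform estimate~\eqref{eq:Yn} follows from~\eqref{eq:C1}.

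For~\eqref{eq:LF1}, stationarity and $m_n$-dependence reduce the block variance divided by $l_n^d$ to a diagonal term $\var(\zeta_{n,0})$ plus $O(m_n^d)$ covariance terms, with edge weights tending to $1$. The standard kernel density calculation together with Condition~\ref{cond:D}(i) gives $\var(\zeta_{n,0}) \to p(x)\int K^2 = \sigma_x^2$; by Condition~\ref{cond:D}(ii), each off-diagonal covariance satisfies $|\cov(\zeta_{n,0},\zeta_{n,i})| \leq C b_n$ via a change of variables, so the off-diagonal contribution is $O(m_n^d b_n) = o(1)$ by~\eqref{eq:C3}.

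The main obstacle is~\eqref{eq:LF2}. The crude deterministic bound $|\wb\zeta_{n,k}| \leq 2\|K\|_\infty/\sqrt{b_n}$ yields $|S^{(l_n)}| \leq 2l_n^d\|K\|_\infty/\sqrt{b_n}$, where $S^{(l_n)} := \sum_{k \in \bbr{1,l_n}^d}\wb\zeta_{n,k}$; making the Lindeberg indicator vanish deterministically would force $l_n^{2d} = o(n^d b_n)$, a constraint incompatible with $l_n \gg m_n$ under~\eqref{eq:C3}-\eqref{eq:C4} in general. I would instead apply a Bernstein-type exponential tail inequality to the $m_n$-dependent bounded sum $S^{(l_n)}$ -- obtained, for instance, by partitioning the block into $O(m_n^d)$ independent subfamilies and invoking the classical Bernstein bound on each, using the block-variance estimate $O(l_n^d)$ inherited from~\eqref{eq:Yn}. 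Choosing $l_n$ of the form $l_n = \sfloor{(\log n)^c\, m_n}$ with $c$ large enough, the resulting exponential decay of $\proba\bpp{|S^{(l_n)}| > n^{d/2}\epsilon}$ is strong enough that
\[
\frac{1}{l_n^d}\esp\bbb{(S^{(l_n)})^2\,\ind\bccbb{|S^{(l_n)}| > n^{d/2}\epsilon}} \leq \frac{C\,l_n^d}{b_n}\,\proba\bpp{|S^{(l_n)}| > n^{d/2}\epsilon} \to 0,
\]
with the $\log^d n$ factor in~\eqref{eq:C4} providing precisely the required margin. For such $l_n$ the block-size constraints $m_n/l_n \to 0$ and $l_n/n \to 0$ follow from~\eqref{eq:C4}, and Theorem~\ref{thm:mn} then delivers~\eqref{eq:MnB}.
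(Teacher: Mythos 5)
Your overall architecture is the same as the paper's: apply Theorem~\ref{thm:mn} to $Y_{n,i}=\wb\zeta_{n,i}$, get~\eqref{eq:Yn} from the moment inequality~\eqref{eq:momentp} of Lemma~\ref{lem:Rn} together with~\eqref{eq:C1}, and get~\eqref{eq:LF1} from $|\cov(\zeta_{n,\vv 0},\zeta_{n,i})|\le C\wb p_{i,m_n}b_n$ and~\eqref{eq:C3}. Those parts are fine, and you correctly identify that the deterministic bound $|\xi_n|\le Cl_n^db_n^{-1/2}$ cannot make the Lindeberg indicator vanish. The gap is in your replacement step for~\eqref{eq:LF2}: the Bernstein route does not close under~\eqref{eq:C3}--\eqref{eq:C4}. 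Splitting $\bbr{1,l_n}^d$ into $m_n^d$ independent subfamilies and union bounding forces you to bound $\proba(|T_r|>t)$ with $t=n^{d/2}\epsilon/m_n^d$ for a sum of about $(l_n/m_n)^d$ independent variables bounded by $M=2\snn{K}_\infty b_n^{-1/2}$; since $Mt$ dominates the block variance here, the Bernstein exponent is of order $t/M=\epsilon(n^db_n)^{1/2}/m_n^d$. Conditions~\eqref{eq:C3} and~\eqref{eq:C4} only yield $m_n^{2d}\log^dn=o(n^d)$, hence $(n^db_n)^{1/2}/m_n^d\gg(b_n\log^dn)^{1/2}$, which tends to $0$ for $b_n=n^{-\gamma}$. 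Concretely, take $d=1$, $b_n=n^{-1/2}$, $m_n=n^{1/2}/\log^2n$: both~\eqref{eq:C3} and~\eqref{eq:C4} hold, yet the exponent is $\asymp\epsilon\log^2n/n^{1/4}\to0$, so the tail bound is trivial; in fact $\zeta_{n,i}$ is essentially a rescaled Bernoulli of height $b_n^{-1/2}$ and success probability $\asymp b_n$, and one checks directly that $\proba(|S^{(l_n)}|>n^{1/2}\epsilon)\asymp n^{-1/2}$ up to logarithms, while your prefactor $l_n^d/b_n$ is of order $n$, so the right-hand side of your final display tends to infinity. Enlarging $c$ in $l_n=\sfloor{(\log n)^cm_n}$ only makes both the prefactor and the block variance worse. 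The root cause is that bounding $\xi_n^2$ by its essential supremum $Cl_n^{2d}b_n^{-1}$ on the Lindeberg event is far too lossy.

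The paper's proof of~\eqref{eq:LF2} instead uses the hypothesis $\esp(|\epsilon_{\vv 0}|^\alpha)<\infty$ for some $\alpha>2$ (part of Theorem~\ref{thm:1}, never invoked in your argument): by H\"older and Chebyshev,
\[
\esp\bpp{\xi_n^2\indd{|\xi_n|>n^{d/2}\epsilon}}\le\nn{\xi_n}_\alpha^2\,\proba(|\xi_n|>n^{d/2}\epsilon)^{(\alpha-2)/\alpha}\le\nn{\xi_n}_\alpha^2\bpp{\frac{\nn{\xi_n}_2^2}{n^d\epsilon^2}}^{(\alpha-2)/\alpha}\,,
\]
and then~\eqref{eq:momentp} with this $\alpha$, combined with $\nn{\wb\zeta_{n,\vv 0}}_\alpha\le Cb_n^{-(\alpha-2)/(2\alpha)}$ (from boundedness of $K$), gives $\nn{\xi_n}_\alpha^2\le Cl_n^db_n^{-(\alpha-2)/\alpha}$ and $\nn{\xi_n}_2^2\le Cl_n^d$, whence $l_n^{-d}\esp[\xi_n^2\indd{|\xi_n|>n^{d/2}\epsilon}]\le C(l_n^d/(n^db_n))^{(\alpha-2)/\alpha}\to0$ by~\eqref{eq:C4} with $l_n=m_n\log n$. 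The idea you are missing is to measure $\xi_n$ on the Lindeberg event in $L^\alpha$, via the $\alpha$-th moment version of Lemma~\ref{lem:Rn}, rather than in $L^\infty$ or through an exponential tail bound; with only second moments of $\epsilon_{\vv 0}$ and the conditions as stated, your version of the argument cannot be repaired.
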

\begin{Prop}\label{prop:2}
Under Condition~\ref{cond:D} and~\eqref{eq:C1},~\eqref{eq:C2} of Condition~\ref{cond:C},
\equh\label{eq:limRnB}
\frac{{S_{n}(\wb Z-\wb\zeta)}}{n^{d/2}}\probato 0\,.
\eque
\end{Prop}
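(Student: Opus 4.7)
The plan is to establish the $L^2$-bound $\snn{S_n(\bar Z-\bar\zeta)}_2 = o(n^{d/2})$, from which convergence in probability follows by Chebyshev's inequality. Setting $W_{n,i} = Z_{n,i}-\zeta_{n,i}$ and $\bar W_{n,i}=W_{n,i}-\mathbb E W_{n,i}$, the task reduces to controlling $\snn{\sum_{i\in\bbr{1,n}^d}\bar W_{n,i}}_2$.

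First I would derive a sharp pointwise bound on $\mathbb E W_{n,i}^2$. The Lipschitz continuity of $K$ gives $|W_{n,i}|\leq L\sabs{\wt X_{i,m_n}}/b_n^{3/2}$, while uniform boundedness gives $|W_{n,i}|\leq 2\snn{K}_\infty/b_n^{1/2}$. Conditioning on $\wt X_{i,m_n}$ and exploiting its independence from $X_{i,m_n}$, together with the change of variables $t=(x-X_{i,m_n})/b_n$ and the density bound $\wb p_{m_n}<\infty$ from Condition~\ref{cond:D}(i), should yield the refined estimate
\[
\mathbb E W_{n,i}^2 \leq C\,\mathbb E|\wt X_{\vv 0,m_n}|/b_n \leq C\,B_{m_n}/b_n,
\]
the last step by Wu's moment inequality~\eqref{eq:wu02}. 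The crucial point is that combining the boundedness of $K$ with the density regularity of Condition~\ref{cond:D}(i) produces the correct $b_n^{-1}$ scaling, rather than the $b_n^{-3}$ coming from the Lipschitz estimate alone.

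Next I would apply Lemma~\ref{lem:Rn} to the centered field $\{\bar W_{n,i}\}$, regarded as a functional of the innovations $\indzd\epsilon$. Since $X_i-X_{i,m_n}=\wt X_{i,m_n}$ involves only the coefficients $a_k$ lying outside the truncation box $\bbr{0,m_n-1}^d$, the lemma should deliver a bound of the form
\[
\snn{S_n(\bar W)}_2 \leq C\,n^{d/2}\,\bpp{\snn{W_{n,\vv 0}}_2 + b_n^{1/2}\Delta_n},
\]
in which the first summand is $O((B_{m_n}/b_n)^{1/2})$ and vanishes by Condition~\eqref{eq:C2}, while the projection-type contribution $b_n^{1/2}\Delta_n$ vanishes by Condition~\eqref{eq:C1}. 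Dividing by $n^{d/2}$ and applying Chebyshev's inequality yields~\eqref{eq:limRnB}.

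The principal obstacle is Step~1: achieving the correct $b_n^{-1}$ scaling for $\mathbb E W_{n,i}^2$, which requires delicately combining the Lipschitz and uniform-boundedness estimates on $K$ through the density regularity of Condition~\ref{cond:D}, so that the final bound depends on $\mathbb E|\wt X_{\vv 0,m_n}|$ rather than on its square. A secondary technical step is verifying that Lemma~\ref{lem:Rn} applies to the nonlinear functional $\bar W_{n,i}$ of the innovations with constants depending on $n$ only through the quantities controlled by~\eqref{eq:C1} and~\eqref{eq:C2}; this should follow by tracking the effective Lipschitz constant of $f_n(u)=K((x-u)/b_n)/b_n^{1/2}$ against the truncated coefficient sequence $\{a_k\,\ind_{k\not\in\bbr{0,m_n-1}^d}\}$.
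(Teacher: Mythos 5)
Your proposal is correct and follows essentially the same route as the paper: inequality~\eqref{eq:Rn} of Lemma~\ref{lem:Rn} reduces everything to the single-site quantity $\snn{\wb Z_{n,\vv 0}-\wb\zeta_{n,\vv 0}}_2$ plus the term $b_n^{1/2}\Delta_n$, which are killed by~\eqref{eq:C2} and~\eqref{eq:C1} respectively, and Chebyshev finishes. The only divergence is that you bound $\esp W_{n,\vv 0}^2$ directly by conditioning on $\wt X_{\vv 0,m_n}$ and using the change of variables together with $\sup_m\wb p_m<\infty$, whereas the paper proves the same $O(B_{m_n}/b_n)$ estimate in Lemma~\ref{lem:mn} by expanding $\snn{\wb\zeta_{n,\vv 0}-\wb Z_{n,\vv 0}}_2^2$ into second moments and a cross term; both arguments are valid and give the same rate.
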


To prove the above two propositions, a key step is to establish the following moment inequalities.
\begin{Lem}\label{lem:Rn}
There exists a constant $C>0$, such that for all $n\in\N$, 
\equh\label{eq:Rn}
{\nn {S_n(\wb Z-\wb\zeta)}_2}\\
\leq 
Cn^{d/2}\bpp{\nn{\wb Z_{n,\vv 0}-\wb\zeta_{n,\vv 0}}_2 + b_n^{1/2}\Delta_n}\,.
\eque
In addition, if $\esp(|\epsilon_{\vv0}|^\alpha)<\infty$ for some $\alpha\geq 2$, then 
\equh\label{eq:momentp}
\bnn{\sum_{i\in\N^d,\vv1\lleq i\lleq j}\wb\zeta_{n,i}}_\alpha\leq C(j_1\cdots j_d)^{1/2}\bpp{\nn{\wb\zeta_{n,\vv 0}}_\alpha + b_n^{1/2}\Delta_n}\,,\mfa j\in\N^d.
\eque

\end{Lem}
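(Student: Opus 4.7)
The plan is to derive both \eqref{eq:Rn} and~\eqref{eq:momentp} from a Peligrad--Utev / Wang--Woodroofe type moment inequality for stationary Bernoulli shifts of the i.i.d.\ innovations $\indzd\epsilon$, applied to the fields $\{\wb Z_{n,i}-\wb\zeta_{n,i}\}_{i\in\Zd}$ and $\{\wb\zeta_{n,i}\}_{i\in\Zd}$ respectively. The inequality to be adapted from \cite{wang11new} bounds $\bnn{\sum_{i\in\bbr{1,n}^d}Y_i}_p$ by $Cn^{d/2}$ times the sum of the marginal $L^p$ norm $\|Y_{\vv 0}\|_p$ and a weighted dependence contribution $\sum_{k\in\bbr{1,n}^d} r_k(Y_n)/\prod_{\tau=1}^d k_\tau^{1/2}$, with $r_k$ a projection-type coefficient capturing how strongly $Y_{n,\vv 0}$ depends on the innovations at lag $k$. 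The marginal norm appears directly on the right-hand side of each inequality, so the crux of the proof is to show that $r_k\leq Cb_n^{1/2}A_{k-\vv 1}$.

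This coefficient estimate would be obtained by interpolating two competing bounds on the effect of replacing the relevant block of innovations by i.i.d.\ copies. Write $X_{\vv 0}=(X_{\vv 0}-X_{\vv 0}^{[k]})+X_{\vv 0}^{[k]}$ with $X_{\vv 0}^{[k]}=\sum_{j\ggeq k-\vv 1}a_j\epsilon_{-j}$, and let $Z'_{n,\vv 0}$ be $Z_{n,\vv 0}$ with $X_{\vv 0}^{[k]}$ replaced by an i.i.d.\ copy. The Lipschitz continuity of $K$ yields the pointwise bound $|Z_{n,\vv 0}-Z'_{n,\vv 0}|\leq L|R|/b_n^{3/2}$ with $R$ of $L^2$-norm comparable to $A_{k-\vv 1}$. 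On the other hand, the uniform boundedness of the densities $p$ and $\{p_m\}$ from Condition~\ref{cond:D} shows that both $Z_{n,\vv 0}$ and $Z'_{n,\vv 0}$ are supported, with respect to the relevant laws, on an event of probability $O(b_n)$. Restricting the expectation $\esp[(Z_{n,\vv 0}-Z'_{n,\vv 0})^2]$ to this support event and combining the two bounds by H\"older yields the target $r_k\leq C b_n^{1/2}A_{k-\vv 1}$; the identical argument applies to $\zeta_{n,\vv 0}$ using the uniformity in $m$ of the density bounds in Condition~\ref{cond:D}, and taking differences covers $Z_{n,\vv 0}-\zeta_{n,\vv 0}$.

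Summation of $r_k/\prod_\tau k_\tau^{1/2}$ over $k\in\bbr{1,n}^d$ then generates the $b_n^{1/2}\Delta_n$ correction in both inequalities. For the $L^\alpha$ version~\eqref{eq:momentp} one invokes the $L^\alpha$ analogue of the moment inequality and uses Wu's inequality~\eqref{eq:wu02} in place of the variance computation to control $L^\alpha$-norms of linear combinations of innovations; this is where the assumption $\esp|\epsilon_{\vv 0}|^\alpha<\infty$ enters. The marginal $\|\wb\zeta_{n,\vv 0}\|_\alpha$ is finite thanks to the uniform sup bound $|\zeta_{n,\vv 0}|\leq\|K\|_\infty/\sqrt{b_n}$ combined with the uniform bound on $p_m$ from Condition~\ref{cond:D}(i).

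The main obstacle is the interpolation step producing the $b_n^{1/2}$ factor. The pure Lipschitz bound by itself would yield $A_{k-\vv 1}/b_n^{3/2}$ on $r_k$, which summed gives $\Delta_n/b_n^{3/2}$ -- far too large to match the bandwidth hypothesis~\eqref{eq:C1}; the pure sup bound, on the other hand, does not decay in $k$ at all. The critical gain comes from exploiting the bounded-density assumption of Condition~\ref{cond:D}, which restricts the kernel to an event of probability $O(b_n)$ and effectively saves two powers of $b_n$ after H\"older. It is precisely this interpolation that ties Condition~\ref{cond:D} to the validity of~\eqref{eq:C1} and drives the subsequent $m$-approximation argument in Propositions~\ref{prop:1} and~\ref{prop:2}.
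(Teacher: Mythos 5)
Your skeleton matches the paper's: reduce \eqref{eq:Rn} and \eqref{eq:momentp} via the Wang--Woodroofe moment inequality to a per-lag coefficient bound of order $b_n^{1/2}A_{k-\vv 1}$, sum against $\prod_\tau k_\tau^{-1/2}$ to get $b_n^{1/2}\Delta_n$, keep the marginal norm as a separate term, and use Wu's inequality~\eqref{eq:wu02} for the $L^\alpha$ case. The gap is in the only step that carries real content: your proposed proof of $r_k\leq Cb_n^{1/2}A_{k-\vv 1}$ cannot work, and in fact no argument based on the coupling difference $Z_{n,\vv 0}-Z'_{n,\vv 0}$ can. Quantitatively, the Lipschitz bound gives $\nn{Z-Z'}_2\leq Cb_n^{-3/2}A_{k-\vv 1}$ and the sup/support bound gives $\nn{Z-Z'}_2\leq C$; combining two upper bounds on the same quantity by a geometric mean yields $C(A_{k-\vv 1}b_n^{-3/2})^\theta$ for some $\theta\in[0,1]$, which for no $\theta$ produces a \emph{positive} power of $b_n$ together with the full power of $A_{k-\vv 1}$ (restricting to the $O(b_n)$ support event saves at most one power of $b_n$, while you would need to save four from $b_n^{-3}A_{k-\vv 1}^2$). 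Worse, the coupling coefficient is genuinely of order one: writing $\nn{Z-Z'}_2^2=2\esp Z^2-2\esp(ZZ')$, one has $\esp Z^2\to p(x)\int K^2$ while $\esp(ZZ')\to 0$ whenever the replaced block $D$ is non-degenerate (this is exactly the phenomenon exploited in Remark~\ref{rem:cheng} to refute Cheng et al.'s Lemma~2). Since $A_{k-\vv 1}\asymp\nn{X_{\vv 0}}_2$ for $k$ near $\vv 1$, the claimed bound $Cb_n^{1/2}A_{k-\vv 1}\to 0$ is simply false for the coupling coefficient.

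What the paper actually bounds is the conditional expectation $\nn{\esp(\wb Z_{n,k}\mid\filF_{\vv 1})}_2$ with $\filF_{\vv 1}=\sigma(\epsilon_k:k\lleq\vv 1)$, which is far smaller than $\nn{Z-Z'}_2$ because the conditional expectation first integrates out the innovations independent of $\filF_{\vv 1}$. Writing $X_k=D_k+T_k$ with $D_k$ the $\filF_{\vv 1}$-measurable part ($\nn{D_k}_2\asymp A_{k-\vv 1}$) and $T_k$ independent of $\filF_{\vv 1}$ with density $p_{T_k}$, the change of variables $t=(x-D-s)/b_n$ gives
\[
\frac1{\sqrt{b_n}}\,\esp\bbb{K\bpp{\frac{x-D-T_k}{b_n}}\Mid D}=b_n^{1/2}\int K(t)\,p_{T_k}(x-b_nt-D)\,\d t\,,
\]
a $Cb_n^{1/2}$-Lipschitz function of $D$ (the Lipschitz constant of $p_{T_k}$ is uniform in $k,n$ because $T_k$ always contains the fixed summand $a_{\vv 0}\epsilon_k$). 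Comparing with an independent copy $\wt D$ then yields $\nn{\esp(\wb Z_{n,k}\mid\filF_{\vv 1})}_2\leq Cb_n^{1/2}\esp|D-\wt D|\leq Cb_n^{1/2}A_{k-\vv 1}$, and the same computation applies to $\wb\zeta_{n,k}$. The gain of $b_n^{1/2}A_{k-\vv 1}$ thus comes from the smoothing of the kernel by the Lipschitz density of the independent innovations (the Jacobian $b_n$ against the normalization $b_n^{-1/2}$), not from the Lipschitz continuity of $K$ nor from the bounded-density interpolation you describe; you would need to rebuild the core of your argument around this conditioning step.
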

These inequalities are consequences of the moment inequality recently established in Wang and Woodroofe~\cite{wang11new}. The proof is deferred to Section~\ref{sec:proofs}.

\begin{proof}[Proof of Proposition~\ref{prop:1}]
Observe that $S_{n}(\wb\zeta)/n^{d/2}$ is a partial sum of $m_n$-dependent random fields and we apply Theorem~\ref{thm:mn}. Observe that since $\nn{\wb\zeta_{n,\vv 0}}_2\to \sigma_x$ as $n\to\infty$,~\eqref{eq:momentp} with $\alpha=2$ and assumption~\eqref{eq:C1} entail~\eqref{eq:Yn}. Thus, to prove~\eqref{eq:MnB}, it suffices to show, for $l_n = m_n\log n$, 
\equh\label{eq:mCLT1}
\limn \frac1{l_n^d}\esp\bbb{\bpp{\sum_{i\in\bbr{1,l_n}^d} \wb \zeta_{n, i}}^2} = \sigma_x^2\,,
\eque
and, writing $\xi_n = \sum_{i\in\bbr{1,l_n}^d}\wb\zeta_{n, i}$, 
\equh\label{eq:mCLT2}
\limn \frac1{l_n^d}\esp\bpp{\xi_n^2\indd{|\xi_n|>n^{d/2}\epsilon}} = 0\,,\mfa \epsilon>0\,.
\eque
By standard calculation, under~\eqref{eq:pij*} of Condition~\ref{cond:D}, for all $n\in\N$ and $i\neq \vv 0$,
\[
 |\esp(\wb\zeta_{n,\vv 0}\wb\zeta_{n, i})| \leq C\wb p_{i,m_n}b_n\leq Cb_n\,.
\]
Therefore, 
\begin{multline*}
\babs{\frac1{l_n^d}\esp\bbb{\bpp{\sum_{i\in\bbr{1,l_n}^d}\wb\zeta_{n, i}}^2} - \esp\wb\zeta_{n,\vv 0}^2} \\
\leq 2\sum_{i\in\bbr{-m_n,m_n}^d}\sabs{\esp(\wb\zeta_{n,\vv 0}\wb\zeta_{n, i})}\indd{ i\neq \vv 0}
\leq Cm_n^db_n\,.
\end{multline*}
Thus, assumption~\eqref{eq:C3} entails~\eqref{eq:mCLT1}.
To prove~\eqref{eq:mCLT2}, observe that 
\[
\esp(\xi_n^2\indd{|\xi_n|>n^{d/2}\epsilon}) \leq \nn{\xi_n}_\alpha^2\proba(|\xi_n|>n^{d/2}\epsilon)^{(\alpha-2)/\alpha} \leq \nn{\xi_n}_\alpha^2\bpp{\frac{\nn{\xi_n}_2^2}{n^d\epsilon^2}}^{(\alpha-2)/\alpha}\,.
\]
This time,~\eqref{eq:momentp} and~\eqref{eq:C1} yield $\nn{\xi_n}_2\leq Cl_n^{d/2}$. For $\alpha>2$, observe that, since $K$ is bounded,  
\[
\nn{\wb\zeta_{n,\vv 0}}_\alpha = \spp{\esp|\wb\zeta_{n,\vv 0}|^\alpha}^{1/\alpha} \leq \bpp{\frac C{b_n^{(\alpha-2)/2}}\nn{\wb\zeta_{n,\vv 0}}_2^2}^{1/\alpha}\leq Cb_n^{-(\alpha-2)/(2\alpha)}\,.
\]
So, $\nn{\xi_n}_\alpha^2\leq Cl_n^db_n^{-(\alpha-2)/\alpha}$.
To sum up, we have obtained that
\[
\frac1{l_n^d}\esp(\xi_n^2\indd{|\xi_n|>n^{d/2}\epsilon}) \leq C{\bpp{\frac{l_n^d}{n^db_n}}^{(\alpha-2)/\alpha}}\,.
\]
Now,~\eqref{eq:C4} entails~\eqref{eq:mCLT2}.
\end{proof}

\begin{proof}[Proof of Proposition~\ref{prop:2}]
To obtain the desired result, it suffices to combine~\eqref{eq:Rn}, assumptions~\eqref{eq:C1} and~\eqref{eq:C2} and Lemma~\ref{lem:mn} below.
\end{proof}

\begin{Lem}\label{lem:mn}
Under the assumption of Condition~\ref{cond:D}, there exists a constant $C$, such that for all $n\in\N$, 
\equh\label{eq:mn0}
\nn{\wb\zeta_{n,\vv 0}-\wb Z_{n,\vv 0}}_2 \leq C\bbb{\bpp{\frac{B_{m_n}}{b_n}}^{1/2} + b_n^{1/2}}\,.
\eque
\end{Lem}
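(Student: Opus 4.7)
The plan is to reduce the claim to a bound on $\nn{Z_{n,\vv 0}-\zeta_{n,\vv 0}}_2$ (since $\wb\zeta_{n,\vv 0}-\wb Z_{n,\vv 0}$ is the centering of $\zeta_{n,\vv 0}-Z_{n,\vv 0}$ and centering is an $L^2$-contraction) and then to exploit the fact that $X_{\vv 0,m_n}$ and $\wt X_{\vv 0,m_n} := X_{\vv 0}-X_{\vv 0,m_n}$ are independent (they are linear combinations of disjoint sets of innovations) together with the bounded-Lipschitz character of $K$.

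The key pointwise inequality I would use is
\[
\bpp{K(u)-K(v)}^2 \;\leq\; \bpp{K(u)+K(v)}\,\min\!\bpp{2\|K\|_\infty,\;L|u-v|}\qmfor u,v\in\R,
\]
for any nonnegative bounded Lipschitz $K$ (with Lipschitz constant $L$); this follows by writing $(K(u)-K(v))^2=|K(u)-K(v)|\cdot|K(u)-K(v)|$, bounding one factor by $K(u)+K(v)$ and the other by $\min(2\|K\|_\infty,L|u-v|)$. The point is that the kernel now appears \emph{linearly}, which is exactly what is needed to disentangle the two independent blocks.

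Applying this inequality with $u=(x-X_{\vv 0,m_n}-\wt X_{\vv 0,m_n})/b_n$ and $v=(x-X_{\vv 0,m_n})/b_n$, so that $|u-v|=|\wt X_{\vv 0,m_n}|/b_n$, and dividing by $b_n$ gives a pointwise bound on $(Z_{n,\vv 0}-\zeta_{n,\vv 0})^2$. Taking expectations and first conditioning on $\wt X_{\vv 0,m_n}$, the two inner expectations $\esp K((x-X_{\vv 0,m_n}-h)/b_n)$ are, after the change of variables $w=(x-y-h)/b_n$ and the uniform bound $\sup_m\wb p_m<\infty$ from Condition~\ref{cond:D}(i), each at most $b_n\,\wb p_{m_n}\int K\leq Cb_n$. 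The two factors of $b_n$ then cancel, leaving
\[
\nn{Z_{n,\vv 0}-\zeta_{n,\vv 0}}_2^2 \;\leq\; C\,\esp\min\!\bpp{1,\;L|\wt X_{\vv 0,m_n}|/b_n}.
\]

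The remainder is routine. By the trivial bound $\min(1,t)\leq t$, the right-hand side is at most $CL\,\esp|\wt X_{\vv 0,m_n}|/b_n$, and Cauchy--Schwarz together with the i.i.d.\ structure of the innovations gives $\esp|\wt X_{\vv 0,m_n}|\leq \nn{\wt X_{\vv 0,m_n}}_2=\nn{\epsilon_{\vv 0}}_2\,B_{m_n}\leq CB_{m_n}$. This yields $\nn{Z_{n,\vv 0}-\zeta_{n,\vv 0}}_2 \leq C(B_{m_n}/b_n)^{1/2}$, which is actually strictly stronger than the stated bound (the $b_n^{1/2}$ summand is slack that can be retained for cosmetic uniformity). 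The only genuinely non-trivial step is the pointwise inequality that linearizes the square in the kernel; the subsequent independence plus change-of-variables is bookkeeping.
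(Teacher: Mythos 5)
Your proof is correct, but it follows a genuinely different route from the paper's. The paper expands $\nn{\wb\zeta_{n,\vv 0}-\wb Z_{n,\vv 0}}_2^2=\esp\wb Z_{n,\vv 0}^2-\esp\wb\zeta_{n,\vv 0}^2+2\bpp{\esp\wb\zeta_{n,\vv 0}^2-\esp(\wb Z_{n,\vv 0}\wb\zeta_{n,\vv 0})}$ and controls the two pieces separately: the difference of second moments via the uniform estimate $\sup_y|p_{m_n}(y)-p(y)|\leq C\esp|\wt X_{\vv 0,m_n}|\leq CB_{m_n}$ (which relies on the Lipschitz continuity of $p_{m}$ from Condition~\ref{cond:D}(i)), and the cross term via the Lipschitz continuity of $K$ together with the independence of $X_{\vv 0,m_n}$ and $\wt X_{\vv 0,m_n}$; the extra $b_n^{1/2}$ in the statement absorbs the products of means $(\esp Z_{n,\vv 0})^2+(\esp\zeta_{n,\vv 0})^2\leq Cb_n$. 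You instead bound $\esp(Z_{n,\vv 0}-\zeta_{n,\vv 0})^2$ directly (legitimate, since centering can only decrease the $L^2$ norm), using the pointwise linearization $(K(u)-K(v))^2\leq(K(u)+K(v))\min(2\nn{K}_\infty,L|u-v|)$ --- valid because $K\geq 0$ --- and then conditioning on $\wt X_{\vv 0,m_n}$: each conditional expectation of $K$ contributes a factor $b_n\sup_m\wb p_m$ that cancels the $b_n^{-1}$ from the normalization, and $\esp\min(1,L|\wt X_{\vv 0,m_n}|/b_n)\leq C\esp|\wt X_{\vv 0,m_n}|/b_n\leq CB_{m_n}/b_n$ by Cauchy--Schwarz. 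Your argument is more elementary and uses strictly less of Condition~\ref{cond:D}: only $\sup_m\wb p_m<\infty$, not the Lipschitz continuity of the densities nor the boundedness of $\wb p$; it also yields the slightly sharper bound $C(B_{m_n}/b_n)^{1/2}$ without the $b_n^{1/2}$ summand. What the paper's decomposition buys is explicit control of the individual quantities $\esp\wb Z_{n,\vv 0}^2$, $\esp\wb\zeta_{n,\vv 0}^2$ and $\esp(Z_{n,\vv 0}\zeta_{n,\vv 0})$, which are reused elsewhere (e.g.\ in identifying $\sigma_x^2$ and in Remark~\ref{rem:cheng}); but for the lemma itself your argument is self-contained and complete.
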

The proof is deferred to Section~\ref{sec:proofs}.
\begin{Rem}\label{rem:cheng}Cheng et al.~\cite{cheng08note} also considered the asymptotic normality of kernel density estimators for linear random fields. Their approach combines an $m$-approximation with a martingale approximation by defining an appropriate filtration in $\Z^2$. However, there is a mistake in Lemma 2 therein. In our notation, they claimed that, instead of~\eqref{eq:mn0}, there exists a constant $C$, such that (in the case $d=2$)
\equh\label{eq:cheng}
\nn{\zeta_{n,\vv 0} - Z_{n,\vv 0}}_2^2\leq Cb_n \mwith m_n\equiv m\,.
\eque
To see that~\eqref{eq:cheng} is not true, observe that
\[
\nn{\zeta_{n,\vv 0}-Z_{n,\vv 0}}^2_2 = \esp \zeta_{n,\vv 0}^2 + \esp Z_{n,\vv 0}^2 - 2\esp(\zeta_{n,\vv 0}Z_{n,\vv 0}).
\]
By standard calculations, $\limn \esp Z_{n,\vv 0}^2= p(x)\int K(s)^2\d s$, and if $m_n\equiv m$, then $\limn\esp\zeta_{n,\vv 0}^2 = p_m(x)\int K(s)^2\d s$ and $
\limn\esp(\zeta_{n,\vv 0}Z_{n,\vv 0})=0$. 
Therefore, the left-hand side of~\eqref{eq:cheng} has a strictly positive limit as $n\to\infty$ (unless $p(x)=p_m(x)=0$), thus a contradiction.
Their approach might still work by adapting an $m$-approximation with unbounded $m$, although it is not clear to us what conditions it would lead to.

\end{Rem}


\section{Proofs}\label{sec:proofs}
\begin{proof}[Proof of Lemma~\ref{lem:pij}]
(i) The existence and Lipschitz continuity of $p$ and $p_m$ have been proved by Wu and Mielniczuk~\cite{wu02kernel}, Lemma 1. 
To prove~\eqref{eq:p*}, observe that
\eqnhn
|p_m(y) - p(y)| & \leq & \int|p_m(y)-p_m(y-x)|\wt p_{m}(x)\d x\nonumber\\
& \leq & C\int|x|\wt p_{m}(x)\d x =C\esp|\wt X_{\vv 0,m}|\,.\label{eq:pm-p}
\eqnen
This entails that $p_m(x)\to p(x)$ uniformly for $x\in\R$ as $m\to\infty$. Therefore,~\eqref{eq:p*} holds.\medskip

\noindent(ii) Fix $i\in\Zd\setminus\{\vv0\}$ and let $F_{ i}$ denote the joint distribution function of $(X_{\vv 0}, X_{ i})$. For the sake of simplicity, we prove the case of $a_{\vv 0} = 1$. Write $R = X_{\vv 0} - \epsilon_{\vv 0}$ and $R_{ i} = X_{ i} - \epsilon_{ i} - a_{ i}\epsilon_{\vv 0}$. Now, $R$ and $R_{ i}$ are dependent random variables. First, we show that
\equh\label{eq:pij}
p_{ i}(x,y) \equiv \frac{\partial^2}{\partial x\partial y} F_{ i}(x,y) =  \esp [p_\epsilon(x-R)p_\epsilon(y-R_{ i}-a_{ i}x)]\,.
\eque
Indeed, 
\eqnhn
F_{ i}(x,y) & = & 
\proba(X_{\vv 0}\leq x, X_{ i}\leq y)\nonumber\\
 & = & \proba(\epsilon_{\vv 0} + R\leq x, \epsilon_{ i} + a_{ i}\epsilon_{\vv 0} + R_{ i}\leq y)\nonumber\\
& = & \esp \Phi_{ i}(x-R,y-R_{ i})\,,\label{eq:differentiation}
\eqnen
with, letting $F_\epsilon$ denote the cumulative distribution function of $\epsilon_{\vv0}$,
\[
\Phi_{ i}(x,y) = \int_{-\infty}^xF_\epsilon(y-a_{ i}x\pp)F_\epsilon(\d x\pp)\,.
\]
Differentiating~\eqref{eq:differentiation} yields~\eqref{eq:pij} (see e.g.~\cite{durrett96probability}, Appendix A.9 on the validation of exchange of differentiation and expectation).

Next, we prove~\eqref{eq:pij*} by establishing the following two steps:
\equh\label{eq:convergence_ij}
\lim_{|i|_\infty\to\infty}\sup_{x,y}|p_{ i}(x,y) - p(x)p(y-a_{ i}x)| = 0\,,
\eque
and
\equh\label{eq:uniform_m}
\limm\sup_{x,y, i}|p_{ i}(x,y) - p_{ i,m}(x,y)| = 0\,.
\eque
Then,~\eqref{eq:convergence_ij} implies the first part of~\eqref{eq:pij*}, and the two limits imply the second part.

To prove~\eqref{eq:convergence_ij}, set 
\[
\wt D_{ i} = \esp(R_{ i}\mid\sigma(\epsilon_{k}:k\lleq\vv 0)) \qmand D_{ i} = R_{ i} - \wt D_{ i}\,, i\in\Zd.
\]
By definition, $D_{ i}$ and $R$ are independent.
Introducing an intermediate term $\esp[p_\epsilon(x-R)p_\epsilon(y-D_{ i}-a_{ i}x)] = p(x)\esp p_\epsilon(y-D_{ i}-a_{ i}x)$, we then bound $|p_{ i}(x,y) - p(x)p(y-a_{ i}x)|\leq \Psi_1 + \Psi_2$ with, under the assumption that $p_\epsilon$ is bounded and Lipschitz,
\eqnh
\Psi_1 & = & |p_{ i}(x,y) - \esp [p_\epsilon (x-R) p_\epsilon(y-D_{ i}-a_{ i}x)]|\\
& \leq & \esp[ p_\epsilon(x-R)|R_{ i} - D_{ i}|]\leq C\esp|\wt D_{ i}|,
\eqne
and
\eqnh
\Psi_2 & = & |p(x)p(y-a_{ i}x) - \esp [p_\epsilon (x-R) p_\epsilon(y-D_{ i}-a_{ i}x)]|\\
& \leq & p(x)\esp|p_\epsilon(y-a_ix-R_i+a_i\epsilon_0) - p_\epsilon(y-D_i-a_ix)|\\
& \leq & C(\esp|\wt D_{ i}|+|a_i|)\,.
\eqne
By~\eqref{eq:wu02}, $|p_{ i}(x,y) - p(x)p(y-a_{ i}x)|\to 0$ as $|i|_\infty\to\infty$. 

To prove~\eqref{eq:uniform_m}, 
define $R_m = X_{\vv 0,m} - \epsilon_{\vv 0}$ and $R_{ i,m} = X_{ i,m} - \epsilon_{ i} - a_{ i}\epsilon_{\vv 0}\indd{|i|_\infty<m}$. Then, similarly as~\eqref{eq:pij}, one has
\[
p_{ i,m}(x,y) = \esp [p_\epsilon(x-R_m)p_\epsilon(y-a_{ i}x\indd{|i|_\infty<m}-R_{ i,m})]\,.
\]
Introducing an intermediate term $\esp[p_\epsilon(x-R)p_\epsilon(y-a_{ i}x\indd{|i|_\infty<m}-R_{ i,m})]$, we obtain that
\eqnh
& & |p_{ i,m}(x,y) - p_{ i}(x,y)|\\
& \leq & \esp\sbb{p_\epsilon(x-R)(|a_{ i}x|\indd{|i|_\infty\geq m} + |R_{ i}-R_{ i,m}|)} + C\esp|R-R_m|\\
& \leq & C(|x|p(x)|a_{ i}|\indd{|i|_\infty\geq m} + |R-R_m|+|R_{ i}-R_{ i,m}|)\,.
\eqne
Clearly, since $X_{\vv 0}$ has finite second moment and $p$ is bounded and Lipschitz, $\sup_x|x|p(x)<\infty$. The summability assumption on $a_{ i}$ implies that $\limm\sup_{|i|_\infty\geq m}|a_{ i}| = 0$, and $\sup_{ i}(|R-R_m|+|R_{ i}-R_{ i,m}|) \to 0$ as $m\to\infty$ (recall~\eqref{eq:wu02}). Therefore, we have thus proved~\eqref{eq:uniform_m}.
\end{proof}

\begin{proof}[Proof of Lemma~\ref{lem:Rn}]
We only prove~\eqref{eq:Rn}. The proof of~\eqref{eq:momentp} is similar. By~\cite{wang11new}, Corollary 2, there exists a constant $C$, such that
\equh\label{eq:filF}
\frac{\nn {S_n(\wb Z-\wb\zeta)}_2}n\leq C\sum_{k\in\bbr{1,n}^d} \frac{\nn{\esp(\wb Z_{n,k} - \wb \zeta_{n,k}\mid \filF_{\vv1})}_2}{\prodd\tau1dk_\tau^{1/2}}\,,
\eque
where $\filF_{\vv1} = \sigma(\epsilon_k:k\in\Zd,k\lleq \vv1)$.
By the definition of $\wb \zeta_{n, i}$,~\eqref{eq:filF} equals (up to the multiplicative constant $C$),
\eqnh
& & \sum_{k\in\bbr{1,n}^d\setminus\bbr{1,m_n}^d}\frac{\nn{\esp(\wb Z_{n,k}\mid \filF_{\vv1})}_2}{\prodd\tau1dk_\tau^{1/2}}
+ \sum_{k\in\bbr{1,m_n}^d}\frac{\nn{\esp(\wb Z_{n,k} - \wb \zeta_{n,k}\mid \filF_{\vv1})}_2}{\prodd\tau1dk_\tau^{1/2}}\\
& \leq & \nn{\wb Z_{n,\vv 0}-\wb\zeta_{n,\vv 0}}_2 + \sum_{k\in\bbr{1,n}^d}\frac{\nn{\esp(\wb Z_{n,k}\mid \filF_{\vv1})}_2}{\prodd\tau1dk_\tau^{1/2}} + \sum_{k\in\bbr{1,m_n}^d}\frac{\nn{\esp(\wb \zeta_{n,k}\mid \filF_{\vv1})}_2}{\prodd\tau1dk_\tau^{1/2}}\\
& \leq & C \bpp{\nn{\wb Z_{n,\vv 0}-\wb\zeta_{n,\vv 0}}_2 + b_n^{1/2}\sum_{k\in\bbr{1,n}^d}\frac{A_{k-\vv1}}{\prodd \tau1dk_\tau^{1/2}}}\,,
\eqne
where the last inequality follows from Lemma~\ref{lem:1} below.
\end{proof}
\begin{Lem}\label{lem:1} Suppose that in addition to Condition~\ref{cond:D}, $\esp(|\epsilon_{\vv0}|^\alpha)<\infty$ for some $\alpha\geq 2$. For all $k\in\N^d, k\neq 1$, 

\eqnhn
\nn{\esp(\wb Z_{n,k}\mid\filF_{\vv1})}_\alpha & \leq & Cb_n^{1/2}A_{k-\vv1}\,,\label{eq:wbZ}\\
\nn{\esp(\wb \zeta_{n,k}\mid\filF_{\vv1})}_\alpha & \leq &  Cb_n^{1/2}A_{k-\vv1}\,.\label{eq:wbzeta}
\eqnen
\end{Lem}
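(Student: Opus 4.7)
My plan is to split $X_k$ into its $\filF_{\vv 1}$-measurable and $\filF_{\vv 1}$-independent components, use the density of the independent component to show that $\esp(Z_{n,k}\mid\filF_{\vv 1})$ is a Lipschitz function (with Lipschitz constant of order $b_n^{1/2}$) of the measurable component, and then apply the moment inequality~\eqref{eq:wu02} to the measurable component. Concretely, I would write $X_k=X_k^{(1)}+X_k^{(2)}$ with
\[
X_k^{(1)}=\sum_{\substack{j\ggeq\vv 0\\ k-j\lleq\vv 1}}a_j\epsilon_{k-j}\qquad\mand\qquad X_k^{(2)}=\sum_{\substack{j\ggeq\vv 0\\ k-j\not\lleq\vv 1}}a_j\epsilon_{k-j}.
\]
Then $X_k^{(1)}$ is $\filF_{\vv 1}$-measurable, $X_k^{(2)}$ is independent of $\filF_{\vv 1}$, and since $k\neq\vv 1$ and $k\ggeq\vv 1$ force at least one coordinate of $k$ to be $\geq 2$, the term $a_{\vv 0}\epsilon_k$ always lies in $X_k^{(2)}$ (assume WLOG $a_{\vv 0}=1$, as in the proof of Lemma~\ref{lem:pij}).

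The crux is to argue that the density $p_V$ of $X_k^{(2)}$ is bounded and Lipschitz with constants independent of $k$ and $n$. Writing $p_V=p_\epsilon * q_k$ where $q_k$ is the density of the remainder $X_k^{(2)}-\epsilon_k$, the Lipschitz constant of $p_V$ is dominated by that of $p_\epsilon$. I would then set $G_n(u)=\esp[b_n^{-1/2}K((x-u-X_k^{(2)})/b_n)]=b_n^{1/2}\int K(z)p_V(x-u-zb_n)\,dz$ (the second equality via the change of variable $z=(x-u-v)/b_n$). Conditioning on $\filF_{\vv 1}$ yields $\esp(Z_{n,k}\mid\filF_{\vv 1})=G_n(X_k^{(1)})$ and $\esp Z_{n,k}=\esp G_n(X_k^{(1)})$, while $\int K=1$ together with Lipschitz continuity of $p_V$ gives $|G_n(u)-G_n(u')|\leq Cb_n^{1/2}|u-u'|$. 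By the triangle inequality (comparing to the constant $G_n(0)$),
\[
\nn{\esp(\wb Z_{n,k}\mid\filF_{\vv 1})}_\alpha \leq 2Cb_n^{1/2}\nn{X_k^{(1)}}_\alpha,
\]
and since $X_k^{(1)}=\sum_{j\ggeq k-\vv 1}a_j\epsilon_{k-j}$, applying~\eqref{eq:wu02} with $2p=\alpha$ produces $\nn{X_k^{(1)}}_\alpha\leq CA_{k-\vv 1}$, establishing~\eqref{eq:wbZ}. For~\eqref{eq:wbzeta}, the identical argument applied to $X_{k,m_n}=\sum_{j\in\bbr{0,m_n-1}^d}a_j\epsilon_{k-j}$ works verbatim: its $\filF_{\vv 1}$-measurable part has coefficient $\ell^2$-norm no larger than $A_{k-\vv 1}$, and the independent part still contains $\epsilon_k$, which preserves the uniform smoothness of its density.

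The main obstacle I anticipate is the uniform Lipschitz control of $p_V$. Condition~\ref{cond:D} constrains $p$ and $p_m$ but not $p_\epsilon$ directly, so this step forces one to rely on the stronger sufficient hypothesis of Lemma~\ref{lem:pij}, namely that $p_\epsilon$ exists and is Lipschitz. The decomposition is deliberately arranged so that a single fixed nonzero coefficient, $a_{\vv 0}$, always sits in the independent piece, supplying the convolution factor that yields the required smoothness uniformly in $k$ and $n$.
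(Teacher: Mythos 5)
Your proposal is correct and follows essentially the same route as the paper: the splitting $X_k=X_k^{(1)}+X_k^{(2)}$ is exactly the paper's decomposition $X_k=D_k+T_k$ over $\Gamma(\vv 1)$ and $\Gamma(k)\setminus\Gamma(\vv 1)$, the $b_n^{1/2}$ gain comes in both cases from the uniform Lipschitz continuity of the density of the independent component (which, as you correctly note, ultimately rests on $p_\epsilon$ being Lipschitz, with $a_{\vv 0}\epsilon_k$ supplying the convolution factor), and the final bound is~\eqref{eq:wu02} applied to the $\filF_{\vv 1}$-measurable part. The only cosmetic difference is that you compare $G_n(X_k^{(1)})$ and $\esp G_n(X_k^{(1)})$ to the constant $G_n(0)$, whereas the paper introduces an independent copy $\wt D$ of $D$ and bounds $\esp_D|D-\wt D|$; the two devices are interchangeable.
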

\begin{proof}[Proof of Lemma~\ref{lem:1}]
First, we control $\nn{\esp(\wb Z_{n,k} \mid \filF_{\vv1})}_\alpha$. For each $k\in\Zd$, introduce the notation
\equh\label{eq:gamma}
\Gamma(k)\defe\{i\in\Zd:i\lleq k\}\,,
\eque
and write
\[
X_{k} = \sum_{i\in\Gamma(k)}a_{k-i}\epsilon_{i} = \bpp{\sum_{i\in\Gamma(\vv 1)} + \sum_{i\in\Gamma(k)\setminus\Gamma(\vv1)}}a_{k-i}\epsilon_{i}=: D_{k} + T_{k}\,.
\]
For the sake of simplicity, write $D\equiv D_{k}, T \equiv T_{k}$, and, given a random variable $Y$, let $\esp_Y(\cdot) \equiv \esp(\cdot\mid Y)$ denote the conditional expectation given the $\sigma$-algebra generated by $Y$. 
Since $k\ggeq \vv1, k\neq \vv1$, $T_{k}$ is a non-degenerate random variable.
Then,
\[
\esp(\wb Z_{n,k}\mid\filF_{\vv 1}) = \frac1{\sqrt {b_n}}\bbb{\esp_DK\bpp{\frac{x-D-T}{b_n}} - \esp K\bpp{\frac{x-D-T}{b_n}}}\,.
\]
Let $\wt D$ be a copy of $D$, independent of $D$ and $T$. Then, the above identity becomes, letting $p_T$ denote the density of $T$,
\begin{multline*}
\frac1{\sqrt{b_n}}\esp_D\esp_{D,\wt D}\bbb{K\bpp{\frac{x-D-T}{b_n}} - K\bpp{\frac{x-\wt D-T}{b_n}}}\\
= b_n^{1/2}\esp_D\int K(t)\bbb{p_T(x-b_nt-D)-p_T(x-b_nt-\wt D)}\d t\,.
\end{multline*}
Since $p_T$ is Lipschitz, the absolute value of the above term is bounded by $C\int|K(s)|\d sb_n^{1/2} \esp_D|D-\wt D|$, almost surely.
(Here $p_T$ depends on $k,n$, but one can show that the Lipschitz constant can be chosen independently from $k,n$. See e.g.~\cite{wu02central}, Lemma 1.)
To sum up, we have
\[
\nn{\esp(\wb Z_{n,k}\mid\filF_{\vv 1})}_\alpha \leq Cb_n^{1/2}\nn{\esp_D|D-\wt D|}_\alpha\leq Cb_n^{1/2}\nn D_\alpha \leq Cb_n^{1/2}A_{k-\vv 1},
\]
where the last inequality follows from~\eqref{eq:wu02}. 
We have thus proved~\eqref{eq:wbZ}. To prove~\eqref{eq:wbzeta}, a similar argument yields 
$\nn{\esp(\wb\zeta_{n,k}\mid\filF_{\vv1})}_\alpha \leq Cb_n^{1/2}A_{k,m_n}$ with $A_{k,m_n} = (\sum_{i\in\bbr{0,m_n-1}^d, i\ggeq k-\vv1}a_{ i}^2)^{1/2}\leq A_{k-\vv 1}$.
\end{proof}

\begin{proof}[Proof of Lemma~\ref{lem:mn}]
For random variables $Z_{n,\vv 0}, \wb Z_{n,\vv 0}, \zeta_{n,\vv 0}, \wb\zeta_{n,\vv 0}$, we replace the index `$n,\vv 0$' by `$n$' for the sake of simplicity.
First observe that 
\[
(\esp Z_n)^2 + (\esp \zeta_n)^2\leq C(\wb p^2b_n+\wb p_{m_n}^2b_n)\leq Cb_n\,,
\]
where the last step we applied~\eqref{eq:pij*}. Then,
\eqnhn
|\esp(\wb\zeta_n^2 - \wb Z_{n}^2)| & \leq & \babs{\int K^2(y)[p_{m_n}(x-b_ny) - p(x-b_ny)]\d y} + C b_n \nonumber\\
& \leq & \sup_y |p_{m_n}(y)-p(y)|\int K^2(s)\d s + Cb_n\,.\nonumber\\
& \leq & C(B_{m_n}+b_n)\,,\label{eq:supy}
\eqnen
where the last inequality follows from~\eqref{eq:pm-p}.
Next, write
\equh\label{eq:dif}
\nn{\wb\zeta_n - \wb Z_n}_2^2 = \esp\wb Z^2_n - \esp \wb\zeta^2_n + 2(\esp \wb\zeta^2_n-\esp(\wb Z_n\wb\zeta_n))\,.
\eque
For the last term on the right-hand side of~\eqref{eq:dif}, observe that $\esp(\wb Z_n\wb\zeta_n) = \esp (Z_n\zeta_n) - \esp Z_n\esp \zeta_n = \esp(Z_n\zeta_n) + O(\wb p_{m_n}b_n)$. 
We claim that $\esp(Z_n\zeta_n)$ is very close to $\esp\zeta_n^2$, under our restriction on the choice of $m_n$. Indeed, 
\equh\label{eq:znzetan}
\sabs{\esp(Z_n\zeta_n) - \esp\zeta_n^2} \equiv \babs{\esp (Z_n\zeta_n) - \int K^2(y)p_{m_n}(x-b_ny)\d y}\,,
\eque
and, 
\eqnh
\esp(Z_n\zeta_n) & = & \iint\frac1{b_n}K\bpp{\frac{x-y-z}{b_n}}K\bpp{\frac{x-y}{b_n}}p_{m_n}(y)\wt p_{m_n}(z)\d y\d z\\
& = & \int K(y)\esp K\bpp{y-\frac{\wt X_{\vv 0,m_n}}{b_n}}p_{m_n}(x-b_ny)\d y\,.
\eqne
Therefore,~\eqref{eq:znzetan}
can be bounded by, since $K$ is Lipschitz,
\begin{multline*}
\int |K(y)|\esp\babs{K\bpp{y-\frac{\wt X_{\vv 0,m_n}}{b_n}} - K(y)}p_{m_n}(x-b_ny)\d y\\
\leq \frac{\esp|\wt X_{\vv 0,m_n}|}{b_n}\int|K(y)|p_{m_n}(x-b_ny)\d y\,,
\end{multline*}
and $\esp|\wt X_{\vv 0,m_n}|\leq CB_{m_n}$ by~\eqref{eq:wu02}. 
To sum up, we have thus shown that (recall that $b_n\downarrow 0$, whence $B_{m_n}$ is dominated by $B_{m_n}/b_n$), under~\eqref{eq:p*},
\[
\nn{\wb\zeta_n - \wb Z_n}_2^2 \leq C\bpp{\frac{B_{m_n}}{b_n} + b_n}\,.
\]

\end{proof}



\begin{thebibliography}{10}

\bibitem{bosq99asymptotic}
D.~Bosq, F.~Merlev{\`e}de, and M.~Peligrad.
\newblock Asymptotic normality for density kernel estimators in discrete and
  continuous time.
\newblock {\em J. Multivariate Anal.}, 68(1):78--95, 1999.

\bibitem{castellana86smoothed}
J.~V. Castellana and M.~R. Leadbetter.
\newblock On smoothed probability density estimation for stationary processes.
\newblock {\em Stochastic Process. Appl.}, 21(2):179--193, 1986.

\bibitem{cheng06central}
T.-L. Cheng and H.-C. Ho.
\newblock Central limit theorems for instantaneous filters of linear random
  fields on {${\mathbb Z}^2$}.
\newblock In {\em Random walk, sequential analysis and related topics}, pages
  71--84. World Sci. Publ., Hackensack, NJ, 2006.

\bibitem{cheng08note}
T.-L. Cheng, H.-C. Ho, and X.~Lu.
\newblock A note on asymptotic normality of kernel estimation for linear random
  fields on {${\mathbb Z}^2$}.
\newblock {\em J. Theoret. Probab.}, 21(2):267--286, 2008.

\bibitem{csorgo95density}
S.~Cs{\"o}rg{\H{o}} and J.~Mielniczuk.
\newblock Density estimation under long-range dependence.
\newblock {\em Ann. Statist.}, 23(3):990--999, 1995.

\bibitem{dedecker98central}
J.~Dedecker.
\newblock A central limit theorem for stationary random fields.
\newblock {\em Probab. Theory Related Fields}, 110(3):397--426, 1998.

\bibitem{dedecker01exponential}
J.~Dedecker.
\newblock Exponential inequalities and functional central limit theorems for a
  random fields.
\newblock {\em ESAIM Probab. Statist.}, 5:77--104 (electronic), 2001.

\bibitem{dedecker02necessary}
J.~Dedecker and F.~Merlev{\`e}de.
\newblock Necessary and sufficient conditions for the conditional central limit
  theorem.
\newblock {\em Ann. Probab.}, 30(3):1044--1081, 2002.

\bibitem{durrett96probability}
R.~Durrett.
\newblock {\em Probability: theory and examples}.
\newblock Duxbury Press, Belmont, CA, second edition, 1996.

\bibitem{elmachkouri11asymptotic}
M.~El~Machkouri.
\newblock Asymptotic normality of the parzen-rosenblatt density estimator for
  strongly mixing random fields.
\newblock {\em Stat. Inference Stoch. Process.}, 14(1):73--84, 2011.

\bibitem{elmachkouri11kernel}
M.~El~Machkouri.
\newblock Kernel density estimation for stationary random fields.
\newblock preprint, available at \url{http://arxiv.org/abs/1109.2694}, 2011.

\bibitem{elmachkouri11central}
M.~El~Machkouri, D.~Voln\'y, and W.~B. Wu.
\newblock A central limit theorem for stationary random fields.
\newblock Submitted, available at \url{http://arxiv.org/abs/1109.0838}, 2011.

\bibitem{hallin01density}
M.~Hallin, Z.~Lu, and L.~T. Tran.
\newblock Density estimation for spatial linear processes.
\newblock {\em Bernoulli}, 7(4):657--668, 2001.

\bibitem{heinrich88asymptotic}
L.~Heinrich.
\newblock Asymptotic behaviour of an empirical nearest-neighbour distance
  function for stationary {P}oisson cluster processes.
\newblock {\em Math. Nachr.}, 136:131--148, 1988.

\bibitem{parzen62estimation}
E.~Parzen.
\newblock On estimation of a probability density function and mode.
\newblock {\em Ann. Math. Statist.}, 33:1065--1076, 1962.

\bibitem{peligrad05new}
M.~Peligrad and S.~Utev.
\newblock A new maximal inequality and invariance principle for stationary
  sequences.
\newblock {\em Ann. Probab.}, 33(2):798--815, 2005.

\bibitem{peligrad07maximal}
M.~Peligrad, S.~Utev, and W.~B. Wu.
\newblock A maximal {$\mathbb L_p$}-inequality for stationary sequences and its
  applications.
\newblock {\em Proc. Amer. Math. Soc.}, 135(2):541--550 (electronic), 2007.

\bibitem{rio95about}
E.~Rio.
\newblock About the {L}indeberg method for strongly mixing sequences.
\newblock {\em ESAIM Probab. Statist.}, 1:35--61 (electronic), 1995/97.

\bibitem{robinson83nonparametric}
P.~M. Robinson.
\newblock Nonparametric estimators for time series.
\newblock {\em J. Time Ser. Anal.}, 4(3):185--207, 1983.

\bibitem{rosenblatt56remarks}
M.~Rosenblatt.
\newblock Remarks on some nonparametric estimates of a density function.
\newblock {\em Ann. Math. Statist.}, 27:832--837, 1956.

\bibitem{silverman86density}
B.~W. Silverman.
\newblock {\em Density estimation for statistics and data analysis}.
\newblock Monographs on Statistics and Applied Probability. Chapman \& Hall,
  London, 1986.

\bibitem{tran90kernel}
L.~T. Tran.
\newblock Kernel density estimation on random fields.
\newblock {\em J. Multivariate Anal.}, 34(1):37--53, 1990.

\bibitem{volny07nonadapted}
D.~Voln{\'y}.
\newblock A nonadapted version of the invariance principle of {P}eligrad and
  {U}tev.
\newblock {\em C. R. Math. Acad. Sci. Paris}, 345(3):167--169, 2007.

\bibitem{volny11central}
D.~Voln{\'y}, M.~Woodroofe, and O.~Zhao.
\newblock Central limit theorems for superlinear processes.
\newblock {\em Stoch. Dyn.}, 11(1):71--80, 2011.

\bibitem{wang11new}
Y.~Wang and M.~Woodroofe.
\newblock A new condition on invariance principles for stationary random
  fields.
\newblock Submitted, available at \url{http://arxiv.org/abs/1101.5195}, 2011.

\bibitem{wu02central}
W.~B. Wu.
\newblock Central limit theorems for functionals of linear processes and their
  applications.
\newblock {\em Statist. Sinica}, 12(2):635--649, 2002.

\bibitem{wu02kernel}
W.~B. Wu and J.~Mielniczuk.
\newblock Kernel density estimation for linear processes.
\newblock {\em Ann. Statist.}, 30(5):1441--1459, 2002.

\end{thebibliography}

\def\cprime{$'$} \def\polhk#1{\setbox0=\hbox{#1}{\ooalign{\hidewidth
  \lower1.5ex\hbox{`}\hidewidth\crcr\unhbox0}}}

\end{document}